\newtheorem{Thm}{Theorem}[section]
\newtheorem{Cor}[Thm]{Corollary}
\newtheorem{Conj}[Thm]{Conjecture}
\newtheorem{Qu}[Thm]{Question}
\newtheorem{Lem}[Thm]{Lemma}
\newtheorem*{thma}{Theorem A}
\newtheorem*{thmb}{Theorem B}
\newtheorem*{thmc}{Theorem C}
\theoremstyle{definition}
\newtheorem{Def}[Thm]{Definition}
\newtheorem{Ex}[Thm]{Example}
\theoremstyle{remark}
\newtheorem{remark}[Thm]{Remark}
\numberwithin{equation}{section}
\newcommand{\Aut}{\operatorname{Aut}}
\newcommand{\stab}{\operatorname{stab}}
\newcommand{\Id}{\operatorname{Id}}
\newcommand{\D}{\mathcal{D}}
\newcommand{\De}{\mathcal{D}}
\newcommand{\Sym}{\operatorname{Sym}}
\newcommand{\supp}{\operatorname{supp}}
\newcommand{\PSp}{\operatorname{PSp}}
\newcommand{\Oo}{\operatorname{O}}
\renewcommand{\Gamma}{\varGamma}
\renewcommand{\epsilon}{\varepsilon}
\renewcommand{\bar}{\overline}
\renewcommand{\leq}{\leqslant}
\renewcommand{\geq}{\geqslant}
\newcommand{\B}{\mathcal{B} }
\renewcommand{\B}{\mathcal{B}}
\renewcommand{\L}{\mathcal{L}}
\newcommand{\W}{\textbf{W}}
\renewcommand{\L}{\mathcal{L}}
\renewcommand{\H}{\mathcal{H}}
\begin{document}


\title{Generating groups using hypergraphs}
 

\author{Nick Gill}
\address{Department of Mathematics, University of South Wales, U.K.}
\email{nick.gill@southwales.ac.uk}
\author{Neil I. Gillespie}
\address{Heilbronn Institute for Mathematical Research, Department of Mathematics, University of Bristol, U.K.}
\email{neil.gillespie@bristol.ac.uk}
\author{Anthony Nixon}
\address{Department of Mathematics and Statistics, Lancaster University, U.K.}
\email{a.nixon@lancaster.ac.uk}
\author{Jason Semeraro}
\address{Heilbronn Institute for Mathematical Research, Department of Mathematics, University of Bristol, U.K.}
\email{js13525@bristol.ac.uk}



\begin{abstract}
To a set $\B$ of 4-subsets of a set $\Omega$ of size $n$ we introduce an invariant called the `hole stabilizer' which generalises a construction of Conway, Elkies and Martin of the Mathieu group $M_{12}$ based on Loyd's `15-puzzle'. It is shown that hole stabilizers may be regarded as objects inside an objective partial group (in the sense of Chermak). We classify pairs $(\Omega,\B)$ with a trivial hole stabilizer, and determine all hole stabilizers associated to $2$-$(n,4,\lambda)$ designs with $\lambda \leq 2$. 
\end{abstract}

\keywords{balanced incomplete block design, primitive permutation group, 15-puzzle, objective partial group, Boolean quadruple system}

\subjclass[2010]{20B15, 20B25, 05B05}

\maketitle

\section{Introduction}

In the beautiful papers \cite{Co1} and \cite{Co}, John Conway (resp. Conway, Elkies and Martin) studies a construction of the Mathieu group $M_{12}$ that is reminiscent of the classic 15-puzzle played with tiles on a $4 \times 4$ grid. In Conway's construction, the grid is replaced by the projective plane $\mathbb{P}_3,$ and the $15$ tiles are replaced by $12$ counters which sit on all but one of the 13 points of $\mathbb{P}_3.$ 

The point without a counter is known as the \textit{hole}. Given a point $p$ with a counter, one performs an ``elementary move'' by placing that counter on the hole $h$ and interchanging the two counters on $q$ and $r$ where $\{p,h,q,r\}$ is the line containing $p$ and $h$. A sequence of such moves is \textit{closed} if that sequence returns the hole $h$ to its initial location. The set of all closed sequences forms a group $\pi_h$ under concatenation and it is shown in \cite{Co1} that $\pi_h \cong M_{12}$ for all holes $h$.

We study a generalised version of this set-up, where $\mathbb{P}_3$ is replaced by a \emph{$4$-hypergraph}, i.e.\ by a pair $\De:=(\Omega,\B)$, and where $\Omega$ is a finite set of size $n$ and $\B$ is a finite multiset of subsets of $\Omega$ (called lines), each of size $4$.  

For any $4$-hypergraph $\De:=(\Omega,\B)$ one generates permutations analogously to before, by placing counters on $n-1$ of the $n$ points in $\Omega$, nominating the remaining point $h$ as the \textit{hole}, and considering closed sequences of elementary moves. An elementary move at a point $p$ is possible if and only if $p$ and $h$ are collinear; it involves placing the counter at $p$ on the hole $h$ and interchanging the counters on $q$ and $r$ for \textit{each} of the lines $\{p,h,q,r\}$.

It is clear that, a priori, an elementary move at a point $p$ may result in different permutations of the set of counters, depending on the order in which one moves through the lines containing $p$ and $h$. In order to ensure that an elementary move determines a unique permutation of the set of counters we restrict our attention to those $4$-hypergraphs that are \emph{pliable}, i.e.\ for which, whenever three points are all contained in two lines, the two lines contain exactly the same points.

We define the \textit{puzzle set} $\L_\De$ to be the subset of $\Sym(\Omega)$ consisting of all move sequences and $\pi_\infty(\De)$ to be the \textit{hole stabilizer} of all closed move sequences for each $\infty \in \Omega$.
Since an arbitrary pair of move sequences cannot be combined to form another move sequence, $\L_\De$ does not form a group, but it does have the structure of a \textit{partial group}. For a definition of a partial group (and of any other unexplained terminology thus far), we refer the reader to \S\ref{s:background}.

\subsection{Main results and structure of the paper}\label{sub:main}

Our first main result, Theorem A, is a direct generalization of a result from \cite{Co}; it connects pliable $4$-hypergraphs with partial groups. Indeed, it does a little more, for under certain conditions, the partial groups we consider here are \emph{objective}; see \S\ref{s:background} for a full definition of this notion. Theorem A is proved in \S\ref{s:puzz}.

\begin{thma}
Let $\De:=(\Omega,\B)$ be a pliable $4$-hypergraph. Then the quadruple $(\L_\De, \D,\Pi,(-)^{-1})$ forms a partial group.

Suppose, in addition, that, for any two points $x,y\in\Omega$, there is a sequence of distinct points $x_0,\dots,x_n$ such that $x=x_0$, $y=x_n$ and, for all $i=1,\dots, n$, the points $x_{i-1}$ and $x_i$ are collinear. Then $(\L_{\De},\Delta)$ is an objective partial group where $$\Delta:=\{\pi_x(\De) \mid x \in \Omega\}.$$ In particular, $\pi_x(\De) \cong \pi_y(\De)$ for all $x,y \in \Omega$.
\end{thma}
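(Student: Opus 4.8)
The first assertion is the content of the theorem's opening paragraph, so I take the partial group $(\L_\De,\D,\Pi,(-)^{-1})$ as given. The plan for the objective refinement is to realise the hole positions as the \emph{objects} of the structure and to identify the abstract conjugation action of $\L_\De$ on $\Delta$ with the physical act of transporting the hole. Concretely, I would first record the transport formula: if $\sigma\in\L_\De$ is a move sequence carrying the hole from $x$ to $y$ (so that $y=x\sigma$ under the natural right action of $\Sym(\Omega)$), then conjugation by $\sigma$ restricts to a bijection $\pi_x(\De)^\sigma=\pi_y(\De)$. This is proved by the standard loop argument: since $\sigma^{-1}$ is itself a move sequence carrying the hole from $y$ back to $x$, for any closed sequence $g\in\pi_x(\De)$ the product $\sigma^{-1}g\sigma$ is a legitimate move sequence $y\to x\to x\to y$, hence a closed sequence based at $y$; running the argument with $\sigma^{-1}$ in place of $\sigma$ gives surjectivity. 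In particular each $\pi_x(\De)$ is genuinely a subgroup of $\L_\De$, as required for an object.

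The substantive step is to verify the defining axiom of an objective partial group, namely that $\D$ coincides with the set $\D_\Delta$ of words $(\sigma_1,\dots,\sigma_k)$ for which there exist objects $X_0,\dots,X_k\in\Delta$ with $X_{i-1}^{\sigma_i}\leq X_i$ for each $i$. For the inclusion $\D\subseteq\D_\Delta$ I would use that every composable word of move sequences threads a well-defined sequence of hole positions $x_0,\dots,x_k$, and set $X_i:=\pi_{x_i}(\De)$; the transport formula then gives $X_{i-1}^{\sigma_i}=X_i$. For the reverse inclusion I would take a word admitting a chain $X_i=\pi_{x_i}(\De)$ with $\pi_{x_{i-1}}(\De)^{\sigma_i}\leq\pi_{x_i}(\De)$ and argue that each $\sigma_i$ must in fact transport the hole from $x_{i-1}$ to $x_i$, so that the word is composable and lies in $\D$. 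This is where the main difficulty lies: one must show that the inclusion of conjugated stabilizers is rigid enough to pin down the endpoints of $\sigma_i$, which amounts to controlling the extent to which a permutation in $\L_\De$ can be realised as a move sequence from more than one starting hole. I expect the fixed-point data inside $\Sym(\Omega)$ --- the hole $x$ being a distinguished element of $\Fix(\pi_x(\De))$ common to all closed sequences based at $x$ --- to be exactly what forces $x_{i-1}$ and $x_i$ to be the genuine endpoints of $\sigma_i$.

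It then remains to check the remaining closure condition in the definition of objectivity, that $\Delta$ is stable under the (partially defined) conjugation action of $\L_\De$; this is immediate from the transport formula, since $\pi_x(\De)^\sigma$ is again of the form $\pi_y(\De)\in\Delta$ whenever it is defined. The associativity, restriction and inversion compatibilities are inherited from the partial group established in the first part, so no new calculation is needed for them.

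Finally, for the concluding isomorphism I would invoke the collinearity hypothesis directly. Given $x,y\in\Omega$, a path $x=x_0,x_1,\dots,x_n=y$ of successively collinear points yields at each step an elementary move carrying the hole from $x_{i-1}$ to $x_i$; composing these produces a single move sequence $\sigma\in\L_\De$ with $x\sigma=y$. The transport formula gives $\pi_x(\De)^\sigma=\pi_y(\De)$, and since conjugation by $\sigma$ is a group isomorphism we conclude $\pi_x(\De)\cong\pi_y(\De)$, completing the proof.
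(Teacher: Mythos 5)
Your transport formula, the forward inclusion $\D\subseteq\D_\Delta$, and the closing derivation of $\pi_x(\De)\cong\pi_y(\De)$ from the connectivity hypothesis all match the paper's proof. But there are two genuine gaps. First, the step you yourself flag as the main difficulty --- forcing each $\sigma_i$ to transport the hole from $x_{i-1}$ to $x_i$ by exploiting $\Fix(\pi_{x_i}(\De))$ --- cannot work as stated, because the hole stabilizer does not determine the hole. For the Boolean quadruple system of order $2^k$ with $k\geq 2$ (Lemma \ref{l:triv1}, Theorem B) one has $\pi_x(\De)=1$ for every $x$, while any two points are collinear, so the connectivity hypothesis holds; here $\Fix(\pi_x(\De))=\Omega$ carries no information and the chain condition places no constraint whatever on the endpoints of $\sigma_i$. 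The paper dissolves this issue rather than solving it: in the proof of Theorem A the elements of $\L_\De$ are regarded as vectors, i.e.\ formal move sequences, so that membership of a word in $\D$, per \eqref{e:fulldef2}, \emph{means} that the chosen representations have matching endpoints; with that identification the equivalence of ``word in $\D$'' with the existence of a chain $(\Delta_{u_1^i})^{\underline{u}^i}=\Delta_{u_1^{i+1}}$ is read off directly from the transport formula, and no rigidity statement about permutations admitting several representations is needed (indeed, as the example shows, none is true). Note also that Definition \ref{obj} demands equality $(X_{i-1})^{f_i}=X_i$, not the inclusion $X_{i-1}^{\sigma_i}\leq X_i$ you quote; working with the inclusion version is part of what makes your pinning-down problem look unavoidable.

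Second, you have misread axiom (O2). It is not the assertion that $\Delta$ is stable under the partially defined conjugation action (which, as you say, is immediate from transport); it is the intermediate-subgroup condition: whenever $X,Z\in\Delta$, $Y\leq Z$ and $f\in\L$ satisfy $X^f\leq Y\leq Z$, then $Y\in\Delta$. Your proposal never addresses this, and it is precisely where the connectivity hypothesis does real work beyond the final isomorphism: connectivity makes any two hole stabilizers conjugate in $\L_\De$, hence of equal finite order, so from $(\Delta_x)^f\leq Y\leq\Delta_y$ one gets $|\Delta_x|=|(\Delta_x)^f|\leq|Y|\leq|\Delta_y|=|\Delta_x|$, forcing $Y=\Delta_y\in\Delta$ --- the paper's cardinality argument. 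A smaller point: the opening assertion that $(\L_\De,\D,\Pi,(-)^{-1})$ is a partial group is part of what the theorem claims, not a hypothesis you may take as given; the verification against Definition \ref{partgrp} (closure of $\D$ under subwords, associativity of concatenation, and property (c)(ii) for reversal of sequences) is routine but must be carried out.
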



$(\L_\De, \D,\Pi,(-)^{-1})$ is defined in \S\ref{s:puzz}, specifically at \eqref{e:fulldef1} and \eqref{e:fulldef2}. We remark that two points $x,y\in\Omega$ are called \emph{collinear} if there is a line in $\B$ that contains them both.

In \S\ref{exsext} we study various examples of pliable $4$-hypergraphs. We also commence a study of those hypergraphs that are $2$-$(n,4,\lambda)$ designs (see \S\ref{s:background} for a definition) which will be an important theme of the rest of the paper. 

Designs are particularly interesting to us because they satisfy the extra supposition given in Theorem A. Hence, first of all, the associated partial groups are all objective. What is more, up to isomorphism, the group $\pi_\infty(\De)$ is unique, i.e.\ it does not depend on the choice of the point $\infty$. 

In \S\ref{exsext} we are able to construct various groups as $\pi_\infty(\De)$, for a point $\infty$ in a design $\De$. These groups are listed in Table 1, and include the almost simple group $\mathrm{PSp}_4(3):2$ (as a primitive subgroup of $S_{27}$), $S_6$ (as a primitive subgroup of $S_{15}$) and $S_8$ (as a primitive subgroup of $S_{35}$). In \S\ref{s: subsequent} we also briefly discuss a recently constructed infinite family of examples.

In \S\ref{sec:theorem b} we present our second main result, Theorem B, which characterises $4$-hypergraphs with a trivial hole stabilizer. First, a definition: a \textit{Boolean quadruple system of order $2^k$}, is a $4$-hypergraph $\De=(\Omega,\B)$ where $\Omega:=\mathbb{F}_2^k$ for some $k$, and $\B$ consists of all sets of four vectors whose sum is $\underline{0}$.

\begin{thmb}
Let $\De=(\Omega,\B)$ be a simple pliable $4$-hypergraph and let $\L:=\L_\De$. Then the following are equivalent:
\begin{itemize}
\item[(a)] $\pi_{\infty}(\De)$ is trivial for all $\infty \in \Omega$;
\item[(b)] $\De$ is the Boolean quadruple system of order $2^k$ for some $k > 0$.
\end{itemize}
\end{thmb}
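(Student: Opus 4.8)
The plan is to treat the two implications separately; (b)$\Rightarrow$(a) is a short explicit computation, while (a)$\Rightarrow$(b) is the substantial direction.

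For (b)$\Rightarrow$(a) I would compute elementary moves directly in $\De=(\mathbb{F}_2^k,\B)$. Fix a hole at $h$ and a collinear point $p$, and set $s:=p+h\neq\underline 0$. The lines through $p$ and $h$ are exactly the sets $\{p,h,q,r\}$ with $q+r=s$ and $\{q,r\}\cap\{p,h\}=\emptyset$; together with the pair $\{p,h\}$ itself these run over all $2^{k-1}$ pairs $\{x,x+s\}$ of $\mathbb{F}_2^k$. Since the move swaps the counter at $p$ with the hole at $h$ (the pair $\{p,h\}$) and interchanges $q,r$ for each such line (the remaining pairs), its net effect on $\Omega$ is exactly the translation $t_s\colon x\mapsto x+s$. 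Translations commute with $t_at_b=t_{a+b}$, so a sequence with hole-path $h=p_0,\dots,p_m$ realises $t_{s_1+\cdots+s_m}$ where $s_i=p_{i-1}+p_i$; it is closed iff $\sum s_i=p_0+p_m=\underline 0$, and then realises $t_{\underline 0}=\Id$. Hence $\pi_\infty(\De)=1$ for every $\infty$.

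For (a)$\Rightarrow$(b) I would first record the shape of a single move: for collinear $x,y$ it is the involution $\tau_{xy}=(x\,y)\prod_{\{x,y,q,r\}\in\B}(q\,r)$, where pliability forces distinct lines through $\{x,y\}$ to meet only in $\{x,y\}$, so these transpositions are disjoint and $\tau_{xy}=\tau_{yx}$. Next I would read triviality as triviality of holonomy: two move sequences from hole $\infty$ to the same hole $p$ differ by a closed sequence, hence induce the same permutation. Fixing $\infty$ and using connectivity of the collinearity graph, this defines a \emph{developing map} $p\mapsto\phi_p\in\Sym(\Omega)$ with $\phi_\infty=\Id$ and $\phi_p(\infty)=p$; appending the edge $\{x,y\}$ to a path ending at $x$ yields the cocycle relation $\tau_{xy}=\phi_y\phi_x^{-1}$.

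The crux is to promote this to a rigid $\mathbb{F}_2$-structure, and the main obstacle I anticipate is showing that the collinearity graph is \emph{complete}, i.e.\ that any two points are collinear. This is precisely what can fail for pliable hypergraphs with trivial holonomy in general (for instance two copies of $\mathbb{F}_2^2$ glued along a single point), so it is exactly here that the full hypothesis — triviality at \emph{every} hole, together with pliability — must be exploited, and I expect this to be the hardest and most delicate step. Once completeness holds the rest is clean: each $\phi_p=\tau_{p\infty}$ is then an involution, and comparing $\tau_{ab}=\phi_b\phi_a^{-1}$ with $\tau_{ba}=\phi_a\phi_b^{-1}$ (using $\tau_{ab}=\tau_{ba}$) gives $\phi_a\phi_b=\phi_b\phi_a$; thus $\langle\phi_p\mid p\in\Omega\rangle$ is elementary abelian. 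Since $\phi_p(\infty)=p$ it is transitive, hence regular, so it has order $n=|\Omega|$, equals $\{\phi_p\}$, and identifies $\Omega$ with $\mathbb{F}_2^k$ via $\infty\mapsto\underline 0$ and $\phi_p=t_p$; in particular $n=2^k$.

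Finally I would read off $\B$. Under this identification $\tau_{ab}=\phi_a\phi_b=t_{a+b}$, which as a permutation is the product of \emph{all} transpositions $\{x,y\}$ with $x+y=a+b$. Comparing this with $\tau_{ab}=(a\,b)\prod_{\{a,b,q,r\}\in\B}(q\,r)$ — an equality of products of disjoint transpositions, each side containing $(a\,b)$ — forces the pairs $\{q,r\}$ with $\{a,b,q,r\}\in\B$ to be precisely the pairs $\{q,r\}\neq\{a,b\}$ with $q+r=a+b$. That is, $\{a,b,q,r\}\in\B$ if and only if $a+b+q+r=\underline 0$, so $\De$ is the Boolean quadruple system of order $2^k$, as required.
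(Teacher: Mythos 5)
Your direction (b)$\Rightarrow$(a) is correct and is essentially the paper's Lemma~\ref{l:triv1}: elementary moves in the Boolean system are exactly the translations $x\mapsto x+s$, so every closed sequence gives the identity. The fatal problem is in (a)$\Rightarrow$(b). You correctly isolate completeness of the collinearity graph as the crux, but you do not prove it --- and it \emph{cannot} be proved from the stated hypotheses, as your own parenthetical example demonstrates. Take $\Omega=\{x,a_1,a_2,a_3,b_1,b_2,b_3\}$ with the two lines $\{x,a_1,a_2,a_3\}$ and $\{x,b_1,b_2,b_3\}$ (two Boolean systems of order $4$ glued at $x$). This hypergraph is simple and pliable, and \emph{every} hole stabilizer is trivial: since no $a_i$ is collinear with any $b_j$, every move sequence is a walk that can change sides only at the cut vertex $x$; in any closed sequence each excursion into $\{x,b_1,b_2,b_3\}$ is itself closed at $x$ and equals the identity by your own $k=2$ computation, and deleting these excursions leaves a closed sequence confined to $\{x,a_1,a_2,a_3\}$, again the identity (e.g.\ $[x,a_1][a_1,a_2][a_2,x]=(x,a_1)(a_2,a_3)\cdot(a_1,a_2)(x,a_3)\cdot(a_2,x)(a_1,a_3)=\Id$). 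So ``triviality at every hole together with pliability'' does \emph{not} force any two points to be collinear: the step you defer as ``hardest and most delicate'' is not merely hard but impossible, and your sketch cannot be completed as it stands.

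What your example really shows is that the argument needs the standing hypothesis of Lemma~\ref{lem1} --- that any two points of $\Omega$ are collinear --- which holds automatically in the $2$-design setting that motivates the paper. Notably, the paper's own proof invokes this tacitly at its very first step, when it forms $[\infty,a,b,\infty]$ for an \emph{arbitrary} triple in order to conclude that every triple is collinear; in your glued example that move sequence is simply undefined when $a$ and $b$ lie on opposite sides. Granting the extra hypothesis, your completion is correct and takes a genuinely different route from the paper's: the paper upgrades $\De$ to a $3$-$(n,4,1)$ design, defines $a*b:=c$ via the line $\{a,b,c,\infty\}$, and verifies associativity and the zero-sum condition by hand from identities such as $[\infty,a,b,c,\infty]=\Id$; you instead obtain the regular elementary abelian group in one stroke from the commuting involutions $\phi_p$ (commutativity coming from $\tau_{ab}=\tau_{ba}$) and recover $\B$ by matching the $2$-cycles of $t_{a+b}$, which neatly replaces the paper's explicit case analysis. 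If you rewrite under the corrected hypothesis, make one point explicit: the transpositions occurring in $\tau_{ab}$ are pairwise disjoint precisely because of pliability together with simplicity (two distinct lines through $\{a,b\}$ share no third point, and no line is repeated), and it is this disjointness that legitimises reading off $\B$ from the $2$-cycles of the translation.
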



%

Our third result, Theorem C, is proved in \S\ref{s: thmc}, using results from \S\ref{s: lambda12}. Theorem C gives a full classification of the structure of $\pi_\infty(\De)$ for all simple pliable $2$-$(n,4,\lambda)$ designs with $\lambda < 3$. In fact, the notion of a ``simple pliable design'' has already appeared in the literature, where it is known as a \emph{supersimple design}. We will use this terminology in what follows.

\begin{thmc}
Let $n \geq 7$ and $\De=(\Omega,\B)$ be a supersimple $2$-$(n,4,\lambda)$ design. If $\lambda < 3$ then one of the following holds:
\begin{itemize}
\item[(a)] $\lambda=1$, $n \equiv 1,4 \pmod{12}$ and  for all $\infty \in \Omega$
$$\pi_{\infty}(\De)\cong\left\{
	\begin{array}{ll}
		M_{12},  & \mbox{if } n = 13; \\
		A_{n-1}, & \mbox{otherwise. } 
	\end{array}
\right.$$
\item[(b)]  $\lambda=2$, $n \equiv 1 \pmod 3$ and for all $\infty \in \Omega$
$$\pi_{\infty}(\De)\cong\left\{
	\begin{array}{ll}
		S_3 \wr S_2,  & \mbox{if } n = 10; \\
		S_{n-1}, & \mbox{otherwise. } 
	\end{array}
\right.$$
\end{itemize}
\end{thmc}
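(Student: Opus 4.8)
The plan is to identify $\pi_\infty(\De)$ with its faithful action on the $n-1$ counters, i.e.\ with a subgroup of $\Sym(\Omega \setminus \{\infty\})$, and to pin this subgroup down using a parity (sign) invariant together with a primitivity argument and Jordan's theorem. Since a $2$-$(n,4,\lambda)$ design with $\lambda \geq 1$ satisfies the connectivity hypothesis of Theorem A (every pair of points lies in a block, so any two points are collinear), Theorem A gives $\pi_x(\De) \cong \pi_y(\De)$ for all $x,y$, so it suffices to treat a single hole $\infty$. The standard counting identities $r = \lambda(n-1)/3$ and $b = \lambda n(n-1)/12$ for the replication number and block count, together with their integrality, yield the congruences $n \equiv 1 \pmod 3$ (when $\lambda = 2$) and $n \equiv 1,4 \pmod{12}$ (when $\lambda = 1$) recorded in (a) and (b); the hypothesis $n \geq 7$ rules out the degenerate small designs.

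First I would compute the sign invariant. Treating the hole as an extra ``blank'' token turns each elementary move into a genuine permutation of $\Omega$: since the design is supersimple, the $\lambda$ lines through a collinear pair $\{p,h\}$ pairwise meet only in $\{p,h\}$, so the move at $p$ (with hole at $h$) is $(p\,h)(q_1\,r_1)\cdots(q_\lambda\,r_\lambda)$, a product of $1+\lambda$ transpositions of sign $(-1)^{1+\lambda}$. A closed sequence of $t$ moves fixes the blank, so the resulting element of $\pi_\infty$ has sign $(-1)^{(1+\lambda)t}$. Hence for $\lambda = 1$ every element is even, giving $\pi_\infty \leq A_{n-1}$, whereas for $\lambda = 2$ the sign equals $(-1)^t$; as the collinearity graph is the complete graph $K_n$ it carries closed walks of odd length (e.g.\ a triangle $\infty \to a \to b \to \infty$), so $\pi_\infty$ contains an odd permutation and $\pi_\infty \not\leq A_{n-1}$.

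The core of the argument is the generic case. For every admissible $n$ other than the two exceptional values I would (i) show $\pi_\infty$ is primitive on the counters --- ideally by establishing $2$-transitivity via closed sequences fixing a prescribed counter --- and (ii) exhibit an explicit short closed sequence realising a single $3$-cycle, obtained by routing the hole around a ``generic'' triangle of lines whose pairwise intersections are as small as supersimplicity allows, so that all the $(q_i\,r_i)$-transpositions cancel. Jordan's classical theorem then forces $\pi_\infty \supseteq A_{n-1}$. Combining with the sign computation gives $\pi_\infty = A_{n-1}$ when $\lambda = 1$ (and $n \neq 13$), and $\pi_\infty = S_{n-1}$ when $\lambda = 2$ (and $n \neq 10$), since in the latter case $\pi_\infty$ contains both $A_{n-1}$ and an odd permutation. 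Theorem B is consistent with this, guaranteeing $\pi_\infty \neq 1$ because such designs are never Boolean quadruple systems in the range $\lambda \leq 2$, $n \geq 7$.

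Finally, the exceptional cases require separate treatment. For $\lambda = 1$, $n = 13$, the design is the unique projective plane of order $3$; here the triangle construction degenerates and one instead recovers the Conway--Elkies--Martin computation $\pi_\infty \cong M_{12}$, consistent with $M_{12} \leq A_{12}$ being primitive yet containing no $3$-cycle. For $\lambda = 2$, $n = 10$, one identifies the supersimple $2$-$(10,4,2)$ design(s) explicitly and computes $\pi_\infty \cong S_3 \wr S_2$ directly; its imprimitive product action on $9 = 3^2$ points is exactly why no single $3$-cycle arises. I expect the main obstacle to be step (ii) of the generic case: constructing a genuine $3$-cycle by an explicit closed move sequence, and doing so uniformly for every supersimple design with the given parameters (the theorem asserts a design-independent answer), while simultaneously proving primitivity and isolating precisely the two parameter values $n = 13$ and $n = 10$ where the construction must fail.
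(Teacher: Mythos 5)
Your outline reproduces the paper's parity computation and its list of exceptions, but it has two genuine gaps, both of which you partly acknowledge. First, primitivity is asserted, not proved, and proving it is the bulk of the actual argument. The paper rules out imprimitivity through a sequence of combinatorial lemmas bounding the size $k$ of a block of imprimitivity (transitivity from Lemma~\ref{transg}, then Lemmas~\ref{l: prim}--\ref{imprimblock}, culminating in Corollary~\ref{c: prim}), and even this only yields primitivity for $n>13$ when $\lambda=1$ and $n>25$ when $\lambda=2$; the residual range $13<n\leq 25$ for $\lambda=2$ requires a further delicate analysis of ``$\infty$-triangles'' (Lemmas~\ref{l: cool}--\ref{l: lambda 2 options}) to derive a contradiction from imprimitivity. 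Your hope of getting primitivity ``ideally via $2$-transitivity'' is not easier than the theorem itself: $2$-transitivity fails for the exceptional designs, so any such argument must already isolate them. Second, step (ii) of your generic case --- constructing an explicit $3$-cycle by a closed move sequence, uniformly over all designs with the given parameters --- is exactly the step the paper is engineered to avoid, and you rightly flag it as the obstacle. Every generator $[\infty,a,b,\infty]$ is a product of elementary moves of support up to $6\lambda+2$; there is no evident local cancellation producing a $3$-cycle, and at $n=13$, $\lambda=1$ none can exist (every nontrivial element of $M_{12}$ has support at least $8$), so a uniform construction degenerates precisely at the boundary you must detect. The paper's key idea is to replace Jordan's $3$-cycle theorem with minimal-degree bounds of Jordan and Manning (Theorem~\ref{t: bounds}): by Lemma~\ref{l: mindeg} the hole stabilizer is generated by elements of support at most $6\lambda+2$ (and for $\lambda=2$ contains a nontrivial element of support at most $6$), and a primitive group of degree $d>16$ (resp.\ $d>10$) containing a nontrivial element of support at most $8$ (resp.\ $6$) must contain $A_d$. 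This converts the small support of the generators, which you already computed for the sign argument, directly into the conclusion $\pi_\infty(\De)\geq A_{n-1}$, with no explicit cycle construction needed.

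Beyond these two gaps, your case coverage is incomplete and contains one factual slip. For $\lambda=1$ the value $n=16$ is genuinely borderline (the Manning bound only forces $A_{n-1}$ for degree greater than $16$), and the paper settles it by GAP computation on the unique supersimple $2$-$(16,4,1)$ design; similarly, for $\lambda=2$ the values $n=7$ and $n=13$ are handled computationally (at $n=13$ there are $2461$ designs to check), and your sketch provides no mechanism for them. Finally, the action of $S_3\wr S_2$ on the $9$ counters in the $n=10$ case is \emph{primitive} (it is the product action on $3^2$ points; see Table~1), not imprimitive as you state --- this matters, since it is exactly why the $n=10$ exception survives the ``primitive implies $A_{n-1}$ or $S_{n-1}$'' step and why the degree-$10$ threshold in Theorem~\ref{t: bounds}(i) is sharp for this problem.
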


Since  $|\L_\De| \geq n\cdot |\pi_\infty(\De)|$ (see Lemma \ref{lem1} (c)), Theorem C shows that $\L_\De$ consists of all elements in the symmetric or alternating group unless $(n,\lambda)=(13,1)$ or $(10,2)$. In the former case $\L_\De$ contains the set $M_{13}$, studied in \cite{Co}; in the latter case $\L_\De$ is equal (as a set) to a primitive subgroup of $S_{10}$ that is isomorphic to $S_6$ (see Example \ref{ex2}).

In the process of proving Theorem C we present various results  concerning the structure of $\pi_\infty(\De)$ for arbitrary supersimple $2$-$(n,4,\lambda)$ designs $\De:=(\Omega,\B)$ (i.e.\ without the restriction $\lambda\leq 2$). These results, which are presented in \S\ref{s: lambda12}, give a rough picture of how the structure of $\pi_\infty(\De)$ behaves as a permutation group (i.e.\ in its natural embedding in $\Sym(\Omega\backslash\{\infty\})$. 
It turns out that, roughly speaking, if we fix $\lambda$ and allow $n$ to increase, $\pi_\infty(\De)$ moves through the following states:
\[
 \textrm{trivial} \longrightarrow \textrm{intransitive} \longrightarrow \begin{array}{l}
                                                                         \textrm{transitive} \\ \textrm{imprimitive}
                                                                        \end{array} \longrightarrow \textrm{primitive}
\longrightarrow A_{n-1} \textrm{ or } S_{n-1}.
\]

In \S\ref{s: ext} we present a conjecture relating to this observed behaviour of $\pi_\infty(\De)$, along with a number of open questions, and avenues for future work.

\subsection{Subsequent work}\label{s: subsequent}

The work in the current paper has been recently extended by the first, second and fourth authors in two directions \cite{ggs}: Firstly, the aforementioned conjecture concerning the behaviour of $\pi_\infty(\De)$ is proved. 

Secondly, an infinite family of designs has been constructed for which the associated hole stabilizers are primitive but neither symmetric nor alternating. Specifically it is proved that for all $m \geq 2$ the groups $\Oo_{2m}^+(2)$ and  $\Oo_{2m}^-(2)$ arise as hole stabilizers. We remark that the group $S_3\wr S_2$ occurring in Theorem C lies in this family thanks to the isomorphism  $S_3\wr S_2\cong \Oo_4^+(2)$. This family also includes the two groups ${\PSp_4(3):2}$ (with $n=28$) and $S_8$ (with $n=36$) that are listed in Table 1 of \S\ref{exsext}. (Note that ${\PSp_4(3):2}\cong \Oo_6^-(2)$ and $S_8\cong \Oo_6^+(2)$.)

This family of examples has a remarkable extra property that is foreshadowed in the remarks following Theorem C, and in the relevant entries of Table 1: if $\De$ is a design from this family with hole stabilizer $\Oo_{2m}^\varepsilon(2)$ (for $\varepsilon$ either $-$ or $+$) then the set $\L_\De$ coincides as a set with a primitive subgroup ${\rm Sp}_{2m}(2)$ in $S_{n-1}$. We remark that, to this point, the only primitive hole stabilizer that we have found for which $\L_\De$ is {\it not} a group is the group $M_{12}$ in the construction of Conway.

The aforementioned remarkable property has been studied subsequently by Cheryl E.~Praeger together with the first, second and fourth authors \cite{ggps}. They were able to give strong information concerning those designs $\De$ for which $\L_\De$ is a group. This work fits in to the general programme proposed in \S\ref{s: ext} of this paper to classify those groups that can arise as hole stabilizers of supersimple designs (see Question~\ref{q: classify}).

\subsection{Relation to the literature}

There is a plethora of combinatorial puzzles in recreational mathematics that are `naturally' associated with the generation of a particular finite group, e.g. the already-mentioned 15-puzzle, the Rubik's cube, and many others (see \cite{Mul} for an excellent treatment of this subject).

In recent years various authors have attempted to study group generation \emph{systematically} by referring to variants of such recreational puzzles. In 1974, Wilson generalized the 15-puzzle by defining `moves' on an arbitrary simple graph with $n$ vertices, and he studied the groups generated by the associated permutations of vertices \cite{wilson}; in general the groups obtained contain $A_n$, however in one case, when $n=7$, the corresponding group is isomorphic to $S_5$. A variation of Wilson's construction was studied by Scherphuis \cite{Scherp}, and later by Yang \cite{Yang}.

A different generalization, also using graphs, has recently appeared in \cite{Eos}. The construction described there is a `slide-and-swap game', and it is performed on a cubic graph. Again groups are generated via permutations associated with moves and, in particular, the simple group $\mathrm{PSL}_2(7)$ is obtained via one such construction.

The most successful work in this area is undoubtedly the two papers \cite{Co1, Co} which obtain the notable success of generating a sporadic simple group using permutations obtained via moves on the finite geometry $\mathbb{P}_3$. One naturally wonders whether the work in \cite{Co1, Co} gives yet another example of sporadic behaviour from the sporadic simple groups, or whether it points the way to that most elusive of mathematical goals, a uniform description of all of the sporadic simple groups. This question places the study of such constructions in a wider mathematical context associated with the generation of the finite simple groups (see for instance \cite{Curtis}, and many others).

\subsection*{Acknowledgements}
We would like to thank Tom McCourt for helpful discussions and Brendan McKay for supplying us with a list of all $2$-$(13,4,2)$ designs and hence allowing us to complete the proof of Theorem C. We also thank Andy Chermak for pointing out that $M_{13}$ may be regarded as an objective partial group, which ultimately provided the starting point for this work.

\section{Background}
\label{s:background}

In this section we briefly provide the necessary background definitions for block designs, permutation groups and partial groups.

\subsection{Block designs}
\label{sub:design}

We briefly collect some key notions to do with block designs, and refer the reader to  \cite{BiggsWhite, Handbook} for more information.

Let $t,n,k$ and $\lambda$ be positive integers. A \textit{balanced incomplete block design} $(\Omega,\B)$, or $t$-$(n,k,\lambda)$ design, is a finite set $\Omega$ of size $n$, together with a finite multiset $\B$ of subsets of $\Omega$ each of size $k$ (called \textit{lines}), such that any subset of $\Omega$ of size $t$ is contained in exactly $\lambda$ lines. In particular, then, a $t$-$(n,k,\lambda)$ design is an $k$-hypergraph. A $t$-$(n,k,\lambda)$ design is \emph{simple} if there are no repeated lines, and is called \emph{pliable} if, whenever three points are all contained in two lines, the two lines contain exactly the same points.

In this paper we are mainly interested in simple, pliable $2$-$(n,k,\lambda)$ designs. Observe that, in such a design, any two lines intersect in at most two points. A design with this property is called \emph{supersimple}.

The following special case will be of particular interest to us. The \textit{Boolean quadruple system of order $2^k$} is the design $\De=(\Omega,\B)$ where $\Omega$ is identified with the set of vectors in $\mathbb{F}_2^k$, and $$\B:=\{\{v_1,v_2,v_3,v_4\} \mid v_i \in \Omega \mbox{ and } \sum_{i=1}^4 v_i = \underline{0}\}.$$ It is easy to see that $\De$ is both a $3$-$(2^k,4,1)$ Steiner quadruple system and a $2$-$(2^k,4,2^{k-1}-1)$ design.

We remark that for various small values of the parameters $t,n,k$ and $\lambda$ the set of $t$-$(n,k,\lambda)$ designs has been completely enumerated. We will use this information at various points for the purposes of computer calculation; we refer the reader to \cite[II.1]{Handbook} for more information.

Finally, we caution the reader that although the literature around designs frequently uses the word `block' as a synonym for `line' (hence the name `block design'), we will never do this. For us, the word `block' will always be used with reference to a system of imprimitivity (see the next subsection).

\subsection{Permutation groups}
\label{sub:permutation}

We briefly collect some key notions to do with permutation groups, and refer the reader to \cite{BiggsWhite, Permutation} for more information. Suppose that $G$ is a group acting on a non-empty set $\Omega$. 

The action is called \emph{transitive} if for any $x, y \in \Omega$ there exists $g \in G$ such that $x^g = y$. The action is called \emph{$t$-homogeneous} if the induced action on the set of all subsets of $\Omega$ of size $t$ is transitive. (Contrast this with the stronger notion of a \emph{$t$-transitive} action in which the induced action on the set of all $t$-tuples of distinct elements of $\Omega$ is transitive.)

Suppose that the action of $G$ on $\Omega$ is transitive. A \textit{system of imprimitivity} is a partition of $\Omega$ into $\ell$ subsets $\Delta_1,\Delta_2,\ldots,\Delta_\ell$ each of size $k$ such that $1< k,\ell< n$, and so that for all $i\in\{1,\dots,\ell\}$ and all $g\in G$, there exists $j\in\{1,\dots,\ell\}$ such that
\[
 \Delta_i^g=\Delta_j.
\]
The sets $\Delta_i$ are called \textit{blocks}. We say that $G$ acts \textit{imprimitively} if there exists a system of imprimitivity. If no such set exists then $G$ acts \textit{primitively} on $\Omega$.

Define the \emph{support} of an element $g\in G$, denoted $\supp(g)$, to be the set of points not fixed by a permutation $g$ and denote the cardinality of $\supp(g)$ by $|\supp(g)|$.

\subsection{Partial Groups}\label{sub:part}
We describe objective partial groups.
For a full introduction to the emerging theory of partial groups including basic properties and examples, we direct the reader to \cite{Ch}. We start with some notation. Let $X$ be a set and let $W(X)$ denote the \textit{free monoid} on $X$. An element of $W(X)$ is thus a finite sequence of (or \textit{word} in) the elements of $X$ and multiplication in $W(X)$
consists of concatenation of sequences (denoted $u \circ v$). The \textit{empty word} is the word of length 0, and it is the identity element of the monoid $W(X)$. No distinction is made between the set $X$ and the set of words of length 1. We can now define a partial group:

\begin{Def}\label{partgrp}
Let $\L$ be a non-empty set and let $\W=W(\L)$. A \emph{partial group} is a quadruple $(\L,\D,\Pi,(-)^{-1})$ (often simply denoted $\L$) where:
\begin{itemize}
\item[(a)] $\D$ is a subset of $\W$ with the properties that $\L \subseteq \D$ and $$u \circ v \in \D \Longrightarrow u,v \in \D;$$
\item[(b)] $\Pi:\D \rightarrow \L$ is a \textit{product map} which restricts to the identity on $\L$ and satisfies:
 $$u \circ v \circ w \in \D \Longrightarrow u \circ (\Pi(v)) \circ w \in \D \mbox{ and } \Pi(u \circ v \circ w)=\Pi(u \circ (\Pi(v)) \circ w);$$
\item[(c)] $(-)^{-1}: \L \rightarrow \L$ is an \textit{inversion map} with the following properties:
\begin{itemize}
\item[(i)] $(-)^{-1}$ is involutory and bijective, and induces a map on $\W$ defined by $$(x_1, \ldots, x_n) \mapsto (x_n^{-1},\ldots,x_1^{-1}) \mbox{ and }$$
\item[(ii)] $u \in \D \Longrightarrow u^{-1} \circ u \in \D$ and $\Pi(u^{-1} \circ u)= \Pi(\emptyset)$.
\end{itemize}
\end{itemize}
\end{Def}

Notice that (a) implies that the empty word is also contained in $\D$. We think of $\D$ as a set of words in $\L$ for which products are defined and where associativity holds. Several elementary consequences of the definition are worked out in \cite[Lemmas 2.2-3]{Ch}, but we will not need these. Instead, we present a simple example which shows that every group is a partial group.

\begin{Ex}\label{ex2}
Let $(\L, \cdot)$ be a group. Then $(\L,\D,\Pi,(-)^{-1})$ is a partial group where:
\begin{itemize}
\item[(i)] $\D$ consists of all words in $\L$, i.e.\ $\D=W(\L)$;
\item[(ii)] $\Pi$ is the multivariable product in $\L$;
\item[(iii)] $(-)^{-1}$ sends $g$ to $g^{-1}$.
\end{itemize}
Conversely, any partial group  $(\L,\D,\Pi,(-)^{-1})$ with the property that $\D=W(\L)$ induces a group $(\L,\cdot)$ where $\cdot$ is the binary operation given by restricting $\Pi$ to $\L \times \L.$
\end{Ex}

One may also define partial subgroups of partial groups in a natural way.

\begin{Def}\label{subpartg}
Let $(\L,\D,\Pi,(-)^{-1})$ be a partial group and let $\H$ be a non-empty subset of $\L$. $\H$ is a \emph{partial subgroup} of $\L$ if the following conditions hold:
\begin{itemize}
\item[(a)] $f^{-1} \in \H$ whenever $f \in \H$, and
\item[(b)] $\Pi(w) \in \H$ whenever $w \in W(\H) \cap \D$.
\end{itemize}
If, in addition, $W(\H) \subseteq \D$, $\H$ is a \textit{subgroup} of $\L$ and we write $\H \leq \L$.
\end{Def}

It is immediate from the definition that if $\H$ is a partial subgroup of a partial group $(\L,\D,\Pi,(-)^{-1})$ then $(\H,\D_\H,\Pi,(-)^{-1})$ also has the structure of a partial group where $\D_\H=\D \cap W(\H).$ Thus by Example \ref{ex2}, $\H$ is also a group if $\H$ is a subgroup of $\L.$ 


From now on, we omit the $\Pi$ symbol and simply denote the image under $\Pi$ of the word $w:=(f_1,f_2, \ldots, f_n) \in \D$ by $f_1f_2\cdots f_n.$ When $\L$ is a partial group and $f \in \L$ we define $$\D(f):=\{x \in \L \mid f^{-1}xf \in \D \},$$ i.e.\ $\D(f)$ is the set of elements of $\L$ for which conjugation by $f$ is defined. Further, when $X,Y$ are subgroups of $\L$ define:
\[\begin{aligned}
N_\L(X,Y)&:=\{f \in \L \mid X \subseteq \D(f) \mbox{ and } f^{-1}Xf \leq Y\},   \textrm{ where }\\
 f^{-1}Xf&:=\{f^{-1}xf \mid f\in X\}, \textrm{ for }f \in \L.
\end{aligned}
\]
We can now introduce the concept of an objective partial group (see also \cite[Definition 2.6]{Ch}). This should be thought of as a partial group where all multiplication is `locally' determined.

\begin{Def}\label{obj}
Let $\L$ be a partial group and let $\Delta$ be a set of subgroups of $\L$. Write $\D_\Delta$ for the set of words $w=(f_1,\ldots, f_n) \in W(\L)$ such that:
\begin{equation}\label{objcond}
\mbox{ there is $(X_0,\ldots,X_n) \in W(\Delta)$ such that $(X_{i-1})^{f_i}=X_i$ for all $1 \leq i \leq n$.} 
\end{equation}
Then $(\L,\Delta)$ is an \textit{objective partial group} with object set $\Delta$ if the following conditions hold:
\begin{itemize}
\item[(O1)] $\D=\D_\Delta$;
\item[(O2)] whenever $X,Z \in \Delta$, $Y \leq Z$ and $f \in \L$ is such that $X^f \leq Y \leq Z$, $Y \in \Delta$.
\end{itemize}
\end{Def}

Currently, the main source of examples of objective partial groups are those which arise from fusion systems over finite $p$-groups, and we do not discuss those here (see \cite{Ch} for a full account). The connection with Conway's $M_{13}$ has already been mentioned by Chermak in unpublished work, and we extend and formalise this connection in the next section.

\section{Hole stabilizers}\label{s:puzz}

Let $\Omega$ be a set of size $n$ and let $\De:=(\Omega,\B)$ be a $4$-hypergraph. Recall that $\De$ is \textit{pliable} if whenever three points are all contained in two lines, then the two lines contain exactly the same points. Two points $x,y \in \Omega,$ are \emph{collinear} if there is some line in $\B$ that contains both $x$ and $y$. (Note that any point is collinear with itself.)

Suppose that a distinct pair of elements, $x,y \in \Omega,$ are collinear. Then we define the associated \textit{elementary move} to be the permutation 
$$[x,y]:=(x,y)\prod_{i=1}^\lambda (x_i,y_i) \in \Sym(\Omega),$$
 where $\{x,y,x_i,y_i\}$ is a line for each $1 \leq i \leq \lambda$. Note that, since $\De$ is pliable, this product is well-defined and $[x,y]$ is an involution equal to $[y,x]$.
 
We define the \textit{trivial move} by setting $[a,a]:=\Id_\Omega$ for each $a \in \Omega$, and we define a \textit{move sequence} $$[a_0,a_1,\ldots,a_k]:=[a_0,a_1] \cdot [a_1,a_2] \cdots [a_{k-1},a_k]$$ where $a_{i-1}$ and $a_i$ are collinear elements of $\Omega$ for each $1 \leq i \leq k$. Observe that, since $[a_{i-1},a_i]=[a_i,a_{i-1}]$ for $0 \leq i \leq k$,
\begin{equation}\label{e: inv close}
 [a_0,a_1,\ldots,a_k]^{-1}=[a_k,\ldots,a_1,a_0].
\end{equation}

A move sequence $[a_0,a_1,\ldots,a_k]$ is called \textit{closed} if $a_0=a_k$.  The \textit{puzzle set} $\L_\De$ is the set of all move sequences, that is
\begin{equation}\label{e:fulldef1}
\L_\De:=\{[a_0,a_1,\ldots,a_k] \mid k\in\mathbb{Z}^+; \textrm{$a_{i-1}, a_i\in\Omega$ are collinear for $1 \leq i \leq k$}\}.
\end{equation}
Note that \eqref{e: inv close} implies that $\L_\De$ is closed under inversion.

For each $x \in \Omega$, define the associated \emph{hole stabilizer} to be the set of all closed move sequences which start and end at $x$, that is
$$\pi_x(\De):=\{[a_0,a_1,\ldots,a_k] \in \L \mid a_0=a_k=x\}$$

If $f:=[a_0,\ldots, a_n]$ and $g:=[b_0,\ldots, b_m]$ are permutations in $\L$ we say that the product $f*g$ is \textit{defined} if $a_n=b_0$ and we write $$f*g:=[a_0,\ldots, a_n, b_1 \ldots, b_m ].$$
 It is an easy exercise to confirm that $\pi_x(\De)$ is a group under $*$.\footnote{It is also possible to derive a \textit{signed} version of this construction, mirroring that considered by Conway et al in \cite{Co} for $M_{13}$. However we do not pursue that here.}

Our first result will be useful when we come to study $2$-$(n,4,\lambda)$ designs. Note, in particular, that the supposition of the result holds for any $2$-$(n,4,\lambda)$ design.

\begin{Lem}\label{lem1}
Suppose that $\De$ is a pliable $4$-hypergraph such that any pair of points are collinear. Fix an element $$f:=[a_0,a_1,\ldots, a_n] \in \L_\De.$$ The following statements hold:
\begin{itemize}
\item[(a)] $f=[a_0,a_1,\ldots a_i]\cdot[a_i,a_{i+1},\ldots,a_n]$ for all $1 \leq i \leq n-1$;
\item[(b)] $f=[a_0,a_1,\ldots,a_i,x,a_i,a_{i+1},\ldots,a_n]$ for each  $0 \leq i \leq n$ and $x \in \Omega$;
\item[(c)]  for each $x \in \Omega$, 
\begin{enumerate}
\item[(i)] $\L_\De=\bigcup\limits_{a,b \in \Omega} [a,x]\cdot \pi_x(\De) \cdot [x,b]$; and 
\item[(ii)] if $a,b\in\Omega$ are distinct, then $[a,x] \cdot \pi_x(\De) \cap [b,x] \cdot \pi_x(\De) = \emptyset$.
\end{enumerate}
In particular, $|\L_\De| \geq n\cdot |\pi_x(\De)|;$
\item[(d)] $\pi_x(\De) = \langle [x,a,b,x] \mid a,b \in \Omega \backslash \{x\}\rangle.$

\end{itemize}
\end{Lem}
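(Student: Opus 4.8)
The plan is to prove each part in turn, building up to part (d), which is the main content. Parts (a) and (b) are essentially bookkeeping: part (a) is immediate from the definition of the product $*$ together with the definition of a move sequence as a concatenation of elementary moves, since $[a_0,\dots,a_n]$ literally \emph{is} the product of the elementary moves $[a_{i-1},a_i]$, and splitting the word at position $i$ just regroups the same product. For part (b), the key observation is that inserting a detour $x$ and immediately returning contributes $[a_i,x]\cdot[x,a_i]=[a_i,x]^2=\Id_\Omega$, because each elementary move is an involution; here I would use the hypothesis that any pair of points is collinear to guarantee that $[a_i,x]$ is actually a legitimate elementary move for \emph{every} $x\in\Omega$, so the detour is always permitted.

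For part (c)(i), the strategy is to take an arbitrary $f=[a_0,\dots,a_n]\in\L_\De$ and use part (b) to insert the point $x$ at the very beginning and the very end, writing
\[
f=[a_0,x,a_0,a_1,\dots,a_n,x,a_n].
\]
Then by part (a) I would factor this as $[a_0,x]\cdot[x,a_0,a_1,\dots,a_n,x]\cdot[x,a_n]$, where the middle factor is a closed move sequence starting and ending at $x$, hence lies in $\pi_x(\De)$. This exhibits $f$ in the claimed double coset, and the reverse containment is clear since each such product is a move sequence. For part (c)(ii), the point is that the underlying permutations determine where the hole started: reading off the action, an element of $[a,x]\cdot\pi_x(\De)$ sends the hole's trajectory so that it begins at $a$, so if the two cosets met, one could cancel to force $a=b$. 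The counting inequality $|\L_\De|\geq n\cdot|\pi_x(\De)|$ then follows by taking the $n$ disjoint cosets $[a,x]\cdot\pi_x(\De)$ as $a$ ranges over $\Omega$.

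Part (d) is the heart of the lemma, and I expect it to be the main obstacle. The containment $\langle[x,a,b,x]\mid a,b\rangle\subseteq\pi_x(\De)$ is trivial since each generator is a closed sequence at $x$. For the reverse, I would take an arbitrary closed sequence $g=[x,a_1,a_2,\dots,a_{k-1},x]\in\pi_x(\De)$ and show it decomposes as a product of the ``triangle'' generators $[x,a,b,x]$. The natural approach is induction on the length $k$: using part (a) to peel off an initial segment, I want to rewrite $g$ by repeatedly inserting return trips to $x$ via part (b). Concretely, I would insert $x$ after each $a_i$ to split the long loop into a chain of short loops, writing schematically
\[
[x,a_1,a_2,\dots,a_{k-1},x]=[x,a_1,x]\cdot[x,a_1,a_2,x]\cdot[x,a_2,a_3,x]\cdots[x,a_{k-1},x],
\]
where $[x,a,x]=[x,a]\cdot[a,x]=\Id_\Omega$ is trivial and each remaining factor $[x,a_i,a_{i+1},x]$ is one of the prescribed generators. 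Verifying that this regrouping is valid and that the telescoping of the inserted return trips exactly reproduces $g$ is the delicate step: I must check carefully, using parts (a) and (b) and the involution property, that inserting the auxiliary returns to $x$ does not change the permutation, and that the collinearity of consecutive points $a_i,a_{i+1}$ (which holds automatically here since any two points are collinear) makes every factor a legitimate move sequence. Once this identity is established, $\pi_x(\De)$ is generated by the triangles as claimed.
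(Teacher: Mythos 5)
Your proposal is correct and follows essentially the same route as the paper: part (b) via the involution identity $[a_i,x]\cdot[x,a_i]=\Id_\Omega$, part (c) via the factorization $f=[a_0,x]\cdot[x,a_0,\ldots,a_n,x]\cdot[x,a_n]$ with disjointness of the cosets read off from the fact that the underlying permutation sends $a$ to $x$, and part (d) via exactly the paper's telescoping $g=\prod_{i}[x,a_i,a_{i+1},x]$ (yours merely carries two extra trivial factors $[x,a_1,x]=[x,a_{k-1},x]=\Id_\Omega$). The one refinement in the paper's proof of (d) is that it first splits $g$ at any internal occurrence of $x$, reducing to the case $x\notin\{a_1,\ldots,a_{k-1}\}$ so that every telescoped factor is literally a generator of the form $[x,a,b,x]$ with $a,b\in\Omega\backslash\{x\}$; your remark about induction on length and peeling off initial segments covers the same point.
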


\begin{proof}
Part (a) is immediate from the definition. To see part (b), note that 
\[
\begin{aligned}[]
[a_0,a_1,\ldots,a_i,x,a_i,a_{i+1},\ldots,a_n]&=[a_0,a_1,\ldots a_i]\cdot[a_i,x]\cdot[x,a_i] \cdot [a_i,a_{i+1},\ldots,a_n] \\
&=[a_0,a_1,\ldots a_i]\cdot [a_i,a_{i+1},\ldots,a_n] \\
&=[a_0,a_1,\ldots a_n]=f.
\end{aligned}
\]
By part (b), for any $x \in \Omega$, $f$ may be written as a product $$[a_0,x]\cdot[x,a_0,\ldots, a_n,x]\cdot[x,a_n],$$ so that $f \in  [a_0,x]\cdot \pi_x(\De) \cdot [x,a_n]$. Conversely each element in any such double coset must lie in $\L$, proving (c)(i). To see (c)(ii) observe that any element in $[a,x] \cdot \pi_x(\De)$ moves the point $a$ to the point $x$, while any element in $[b,x] \cdot \pi_x(\De)$ moves the point $b$ to the point $x$. Thus these two sets have empty intersection as required.


It remains to prove (d). Fix an element $g:=[x,a_1,\ldots,a_{n-1},x] \in \pi_x(\De)$. If $x \in \{a_1,\ldots, a_{n-1}\}$, then $g$ may be written as a product of two elements in $\pi_x(\De)
$, so we are reduced to the case where $x \notin \{a_1,\ldots, a_{n-1}\}$ . By part (b), $$g=[x,a_1,a_2,x,a_2,a_3,x,\ldots, x, a_{n-2},a_{n-1},x]=\prod_{i=1}^{n-2} [x,a_i,a_{i+1},x].$$ This proves (d), and completes the proof of the lemma.
\end{proof}

Let $W(\L_\De)$ be the set of all words in elements of $\L_\De$ and define:
\begin{equation}\label{e:fulldef2}
\begin{aligned}
\D&:=\{(f_1,\ldots, f_n) \in W(\L_\De) \mid f_i * f_{i+1} \mbox{ is defined } \forall \mbox{ } 1 \leq i \leq n-1 \}; \\
\Pi& \textrm{ to be concatenation of move sequences;} \\
(-)^{-1}& \textrm{ to be reversal of move sequences.}
\end{aligned}
\end{equation}

We are ready to prove our first main result, which was stated in \S\ref{sub:main}.


\begin{proof}[Proof of Theorem A]
First we show that $(\L,\D,\Pi,(-)^{-1})$ is a partial group. By construction, Definition \ref{partgrp} (a) holds. Similarly, since $\Pi$ concatenates elements of $\L$ (as in Lemma \ref{lem1} (a)), Definition \ref{partgrp} (b) holds trivially. Finally, by construction, $(-)^{-1}$ is involutory and bijective on $\L$, and by expressing each element of $\D$ as a product of elementary moves it is easy to see that the extension of $(-)^{-1}$ to $\W(\L)$ satisfies (c)(ii) in Definition \ref{partgrp}. This completes the proof that $(\L,\D,\Pi,(-)^{-1})$ is a partial group.

It remains to prove that, given the extra supposition, $(\L_\De,\Delta)$ is objective. For brevity, we regard elements of $\L_\De$ as vectors $\underline{a}:=[a_1,a_2,\ldots,a_n]$ where $a_i \in \Omega$ and $n > 1$.
Write $\Delta_x$ for $\pi_x(\De)$ for each $x \in \Omega$ and observe that $\underline{u}^1\cdot\underline{u}^2 \cdots  \underline{u}^k \in \D$ if and only if $(\Delta_{u_1^i})^{\underline{u}^i}=\Delta_{u_1^{i+1}}$ for all $1 \leq i \leq k-1$ so (O1) holds.

Finally, suppose that $x,y \in \Omega$, $f \in \L$ is such that $\Delta_x \subseteq \D(f)$ and $Y$ is such that $(\Delta_x)^f \leq Y \leq \Delta_y$. Then there exist $k > 0$ and elements $a_1,\ldots,a_k \in \Omega$ such that $f=[x,a_1,\ldots,a_k,y].$ Hence $\Delta_y \subseteq \D(f^{-1})$ and $$|\Delta_x|= |(\Delta_x)^f|=|\Delta_y|=|Y|.$$ In particular, $Y=\Delta_y \in \Delta,$ which proves that (O2) holds. This completes the proof.

\end{proof}

\section{Examples}\label{exsext}
In this section we motivate the main results of this paper by considering some examples. Although we will mainly be interested in those $4$-hypergraphs which arise from designs, our first example is not of this flavour.

\begin{Ex}
Let $n > 2$ and $K_n$ be the complete graph on $n$ vertices labelled by $1,\ldots, n$. Let $\Omega$ be a set of size $2n$ consisting of points $\{x_i,y_i \mid 1 \leq i \leq n\}$ and let $\B$ be the set $\{\{x_i,y_i,x_j,y_j\} \mid ij \in E(K)\}$. It is easy to see that $\De=(\Omega,\B)$ is a pliable $4$-hypergraph and using GAP \cite{GAP}, one can check that for each $\infty \in \Omega$,

$$\pi_\infty(\De) \cong \left\{
	\begin{array}{ll}
		S_2 \wr S_{n-1},  & \mbox{if $n$ is odd}; \\
		Q_n, & \mbox{if $n$ is even,}  
	\end{array}
\right.$$
where $Q_n$ is any index 2 subgroup of $S_2 \wr S_{n-1}$ which does not factor as a direct product.
\end{Ex}

Here is another rather special example, constructed from the unique supersimple $2$-$(10,4,2)$ design (see \cite[II.1.25]{Handbook}).

\begin{Ex}\label{codeex}
Let $\De=(\Omega,\B)$ be the unique supersimple $2$-$(10,4,2)$ design.
Thus $|\B|=15$ and one checks that the following is true:
\begin{equation}\label{21042}
\mbox{\textit{if \{p,q,r,s\} and \{r,s,t,u\} are lines, then \{p,q,t,u\} is a line.} }
\end{equation}

Let $M$ be the incidence matrix for $\De$ viewed as a matrix over $\mathbb{F}_2$. That is, $M$ is a $15 \times 10$ matrix where rows are indexed by lines and columns are indexed by points and where $$m_{ij} = \left\{
	\begin{array}{ll}
		1,  & \mbox{if $j$ is a point in $i$}; \\
		0, & \mbox{otherwise. } 
	\end{array}
\right.$$

Let $C$ be the linear code (vector space) spanned by the rows of $M$. Also, for each $p \in \Omega$, define: $$C_p:=\{\underline{c} \in C \mid c_p=0\}.$$ Then, using (\ref{21042}), one easily verifies that for each $p,q \in \Omega$ the element $[p,q]$ sends codewords in $C_p$ to codewords in $C_q$ by permuting the coordinates entrywise. In particular $\pi_p(\De)$ acts as a group of automorphisms of $C_p$, and a GAP computation \cite{GAP} reveals that in fact $$\pi_\infty(\De) \cong S_3 \wr S_2 \cong \Aut(C_p)$$ for each $p \in \Omega$. Indeed, there is more: one can verify that $\L_\De$ consists of 720 permutations which together form a primitive subgroup of $S_{10}$ isomorphic with $S_6$.

\end{Ex}

We remark that the construction above is inspired by that found in \cite[Section 3]{Co} where the authors construct the ternary Golay code from the incidence matrix for $\mathbb{P}_3.$ 



\begin{table}
\begin{center}
\begin{tabular}{|c| c| c| c| c| c|} 
\hline
$n$&$\lambda$&$\Aut(\De)$& $\pi_\infty(\De)$&Action&$\L_\De$\\
\hline
8&3&$\mathrm{AGL_3}(2)$&1&trivial&$\L_\De=(C_2)^3$\\
\hline
9&3&$\mathrm{AGL}_1(9)$&$A_4 \wr C_2$&transitive&$|\L_\De|>9\cdot |\pi_\infty(\De)|$\\
\hline
10&2&$S_6$&$S_3 \wr S_2$& primitive&$\L_\De=S_6$\\
\hline
13&1&$\mathrm{PSL}_3(3)$&$M_{12}$&primitive&$|\L_\De|>13\cdot |\pi_\infty(\De)|$\\
\hline
16&6&$\mathrm{AGL}_2(4)$&$(S_3)^5$&intransitive&$|\L_\De|>16\cdot |\pi_\infty(\De)|$\\
\hline
16&3& $2^4.S_6$&$S_6$&primitive&$\L_\De=2^4.S_6$\\
\hline
16&7&$\mathrm{AGL}_4(2)$&1&trivial&$\L_\De=(C_2)^4$\\
\hline
17&6&$\mathrm{AGL}_1(17)$&$S_8 \times S_8$&intransitive&$|\L_\De|>17\cdot |\pi_\infty(\De)|$\\
\hline
28&5&$\mathrm{Sp}_6(2)$&$\mathrm{PSp}_4(3):2$&primitive&$\L_\De=\mathrm{Sp}_6(2)$\\
\hline
32&15&$\mathrm{AGL}_5(2)$&$1$&trivial&$\L_\De=(C_2)^{5}$\\
\hline
36&9&$\mathrm{Sp}_6(2)$&$S_8$&primitive&$\L_\De=\mathrm{Sp}_6(2)$\\
\hline
49&18&$-$&$S_{24}\times S_{24}$&intransitive&$|\L_\De|>49\cdot |\pi_\infty(\De)|$\\
\hline
\end{tabular}
\vspace{0.1in}
\end{center}
\caption{$2$-$(n,4,\lambda)$ designs $\De$ for $n\leq 50$, $\Aut(\De)$ (if known), the corresponding hole stabilizer $\pi_\infty(\De)$ (and its action), and a description of the puzzle set $\L_\De$.}
\end{table}

Now let $X$ be a $2$-homogeneous primitive permutation group of degree $n$, so that the orbit $\mathcal{O}$ of a block of size 4 is necessarily a $2$-$(n,4,\lambda)$ design $\De$ for some $\lambda > 0$. Using the GAP library of primitive groups, we computed $\pi_\infty(\De)$ for supersimple designs $\De$ which arise in this way for all $n \leq 50.$ Table 1 provides a list of those designs $\De$ for which $\pi_\infty(\De)$ is not symmetric or alternating. All groups are described using the \textsc{atlas} notation \cite{atlas}. 

\begin{remark}\label{r. m13}
Note that an equality of the form $``\L_\De=''$ in the last column of Table 1 does not indicate that, as a partial group, $\L_\De$ also has the structure of a group. Instead we are asserting that \emph{the underlying set} in $\L_\De$ coincides with the set of elements in some subgroup of $\Sym(\Omega)$. 
\end{remark}

\begin{remark}
We noted in the introduction that $\L_\De \supseteq M_{13}$, where the definition of $M_{13}$ is as follows \cite{Co}: $$M_{13}:=\{[\infty,a_1,\ldots,a_k] \mid k \in \mathbb{Z}^+; a_i \in \mathbb{P}_3 \mbox{ for } 1 \leq i \leq k \}.$$ In fact $M_{13}$ is a \emph{proper} subset of $\L_\De$. To see this, recall that in \cite{Co1}, Conway proves that $|M_{13}|=13 \cdot |M_{12}|$. On the other hand  Lemma \ref{lem1} (c)(ii) asserts that $|\L_\De| \gneq 13\cdot |\pi_\infty(\De)|$; moreover, by the same result, equality would imply that for any pair of points $x,y$ with $\{x,y,\infty\}$ not contained in a line, the permutation $[x,y] \in \pi_\infty(\mathbb{P}_3) \cong M_{12}$. However $\supp([x,y])=4$, while every element of $M_{12}$ has support of size at least $8$ (see Theorem \ref{t: bounds}), a contradiction.
\end{remark}

Observe that, for each entry in Table 1 such that $\pi_\infty(\De)$ is the trivial group, $(n,\lambda)$ is of the form $(2^k,2^{k-1}-1)$ for some $k > 0$. Indeed this is necessarily the case as we prove in the next section.

\section{Trivial hole stabilizers}
\label{sec:theorem b}
In this section we will prove Theorem B. Let $\De:=(\Omega,\B)$, where $\Omega=\mathbb{F}_2^k$, be a Boolean quadruple system. One observes that  $\De$ is simultaneously a $3$-$(2^n,4,1)$ design and a $2$-$(2^n,4,2^{n-1}-1)$ design. When $\De$ is a pliable $4$-hypergraph and $a,b \in \Omega$, write $\bar{a,b}$ for the set $\{a,b\}$ union with the set of points in $\Omega$ contained in some line with $\{a,b\}$.

\begin{Lem}\label{l:triv1}
Let $\De=(\Omega,\B)$ be the Boolean quadruple system of order $2^k$, where $\Omega=\mathbb{F}_2^k$. Then $\L_{\De}$ is the image of $\Omega$ under the the regular action. Consequently,  $$\pi_\infty(\De)=\stab_{\L_\De}(\infty)=1,$$ for each $\infty \in \Omega.$
\end{Lem}

\begin{proof}
Let $$\begin{CD}
\rho: \Omega  @>>>  \Sym(\Omega) 
\end{CD}$$ be the regular action of $\Omega$ given (for each  $\omega \in \Omega$) by $\omega \rho := \sigma_\omega$, where $v\sigma_\omega = v+\omega$ for each $v \in \Omega.$ It suffices to observe that $\sigma_\omega=[a,b]$ for any $a,b \in \Omega$ satisfying $a+b=\omega$. Indeed, $a\sigma_\omega=b$ and for each $c \in \Omega \backslash \{a,b\}$, $c\sigma_\omega=c+\omega=c+a+b,$ as needed. This completes the proof.
\end{proof}




We can now prove Theorem B.

\begin{proof}[Proof of Theorem B]
Let $\De=(\Omega,\B)$ be a pliable $4$-hypergraph with trivial hole stabilizer. For each $\infty,a,b \in \Omega$, it is easy to see that $\infty \notin \overline{a,b}$ implies that the permutation $[\infty,a,b,\infty]$ sends $a$ to $b$, a contradiction. Hence each triple of elements is collinear, and since $\De$ is pliable, $\De$ is a $3$-$(n,4,1)$ design.

We now associate to $\Omega$ the following binary operation $*$ given by:
\begin{itemize}
\item[(i)] $\infty*a=a*\infty=a$ for all $a\in\Omega$;
\item[(ii)] $a*a=\infty$ for all $a\in\Omega$;
\item[(iii)] $a*b:=c$ for all $a,b\in\Omega\backslash\{\infty\}$ with $a\neq b$ where $\{a,b,c,\infty\}\in\mathcal{B}$.
\end{itemize}
It is a consequence of this definition that $a*b=b*a$ for all $a,b \in \Omega.$  We claim that $*$ is associative, that is, $(a*b)*c=a*(b*c)$ for all $a,b,c\in\Omega.$
The case $a=b=c$ follows from (i) and (ii).  If $a\neq b$ then $(a*b)*a=a*(a*b)=a*(b*a)$,
and it follows from (iii) that $(a*a)*b=\infty*b=b=a*d=a*(a*b)$, where $\{\infty,a,b,d\} \in \B$.
It remains to consider the case where $a,b,c$ are pairwise distinct. Let $\{a,b,c,x\}$ be the unique line containing $\{a,b,c\}$. If $x=\infty$, then $$(a*b)*c=c*c=\infty=a*a=a*(b*c).$$	
Otherwise, let $a*b=s$, $b*c=t$, $s*c=p$
and $a*t=q$, so that it suffices to show $p=q$.  

Our assumptions imply that 
\begin{align*}
[\infty,a]=(\infty,a)(b,s)(t,q)\Pi_1, \,\,\,\,\,\,  & [a,b]=(a,b)(\infty,s)(c,x)\Pi_2,\\
[b,c]=(b,c)(\infty,t)(a,x)\Pi_3, \,\,\,\,\,\,  & [c,\infty]=(\infty,c)(s,p)(b,t)\Pi_4,
 \end{align*}
where $\Pi_i$ is a product of transpositions for $1 \leq i \leq 4$. Since
$$\tau:=[\infty,a,b,c,\infty]=[\infty,a][a,b][b,c][c,\infty]=\Id_\Omega,$$
we have $q=t^{[\infty,a]}=t^{[c,\infty][b,c][a,b]}=x$, and hence $$q=s^{[\infty,a][a,b][b,c]}=s^{[c,\infty]}=p,$$ proving the claim.

We conclude that $(\Omega,*)$ is an abelian group with identity element $\infty$.  Moreover, from (ii) above, each non-identity element has order $2$ and hence we may identify $(\Omega,*)$ with $(\mathbb{F}_2^k,+)$ for some $k>0$. It thus remains to check that $a*b*c*d=\infty$ whenever $\{a,b,c,d\}$ is a line. This clearly holds if $\infty \in  \{a,b,c,d\}$ so we may assume this not the case. Let $a*b=x$ and $x*c=w$, so that
\[
[\infty,b]=(\infty,b)(x,a)\Pi_1, \,\,
[b,c]=(b,c)(a,d)\Pi_2, \textrm{  and  }
[c,\infty]=(c,\infty)(x,w)\Pi_3,
\]
where $\Pi_i$ is a product of transpositions for $1 \leq i \leq 3$. Since $[\infty,b,c,\infty]=1,$ $d=x^{[\infty,b][b,c]}=x^{[c,\infty]}=w$. Since $*$ is associative, this proves that $a*b*c=d$, and hence $a*b*c*d=\infty$, as required.
\end{proof}








\section{The behaviour of \texorpdfstring{$n$ and $\lambda$}{n and lambda}}\label{s: lambda12}
Consider $\De=(\Omega,\B)$, a supersimple $2$-$(n,4,\lambda)$ design. In this section we prove a number of results of similar ilk: we assume that $n$ satisfies some inequality with respect to $\lambda$ and we draw conclusions as to the structure of $\pi_\infty(\De)$. These results will be applied in \S\ref{s: thmc} in our classification of the groups $\pi_\infty(\De)$ associated to simple $2$-$(n,4,\lambda)$ designs with $\lambda\leq 2$.

In what follows, $\De$ is a fixed supersimple $2$-$(n,4,\lambda)$ design and $G:=\pi_\infty(\De)$ is the associated hole stabilizer. Note that no two lines of $\De$ intersect in more than two points.  As in the previous section, for $a,b \in \Omega$, we write $\bar{a,b}$ for the set consisting of points $a,b$ and the $2\lambda$ points in $\Omega$ that are contained in some line with $\{a,b\}$. In particular, $|\bar{a,b}|=2\lambda+2$.

We begin with a criterion for transitivity.

\begin{Lem}\label{transg}
$G$ is transitive for all $n > 4\lambda+1$.
\end{Lem}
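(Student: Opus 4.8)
The plan is to exploit the explicit generating set from Lemma~\ref{lem1}(d), namely $G=\langle[\infty,a,b,\infty]\mid a,b\in\Omega\setminus\{\infty\}\rangle$, and then reduce transitivity to a connectivity statement about an auxiliary graph. First I would compute the action of a single generator $g:=[\infty,a,b,\infty]=[\infty,a]\cdot[a,b]\cdot[b,\infty]$ on the point $a$, under the hypothesis that $\{a,b,\infty\}$ is \emph{not} contained in a common line. In that case $\infty\notin\overline{a,b}$, so the involution $[a,b]$ fixes $\infty$; tracing $a$ through the three elementary moves gives $a^{[\infty,a]}=\infty$, then $\infty^{[a,b]}=\infty$, then $\infty^{[b,\infty]}=b$, so that $a^g=b$. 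Hence, \emph{whenever no line of $\De$ contains all three of $a$, $b$ and $\infty$, the points $a$ and $b$ lie in a common $G$-orbit}.

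This reduces the problem to a purely combinatorial connectivity statement. I would define a graph $\Gamma$ on the vertex set $\Omega\setminus\{\infty\}$ by joining $a$ to $b$ precisely when $\{a,b,\infty\}$ is not collinear. The previous paragraph shows that adjacent vertices of $\Gamma$ lie in a common $G$-orbit, so if $\Gamma$ is connected then $G$ is transitive on $\Omega\setminus\{\infty\}$ (a collinear pair being joined through non-collinear intermediaries along a path). It then remains to count degrees in $\Gamma$. For a fixed $a$, the points $b$ \emph{not} joined to $a$ are exactly those with $\{a,b,\infty\}$ collinear, i.e.\ those lying on one of the $\lambda$ lines through the pair $\{a,\infty\}$, other than $a,\infty$ themselves. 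Since $\De$ is supersimple, two distinct lines through $\{a,\infty\}$ meet only in $\{a,\infty\}$, so these $2\lambda$ points are distinct. Thus every vertex of $\Gamma$ has degree $(n-1)-1-2\lambda=n-2-2\lambda$; equivalently, $\Gamma$ is the complement of the $2\lambda$-regular ``collinear-with-$\infty$'' graph.

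Finally I would invoke the standard minimum-degree criterion for connectivity: on $N$ vertices, if any two non-adjacent vertices have neighbourhoods whose sizes sum to more than $N-2$, then they share a neighbour, so $\Gamma$ has diameter at most $2$ and is connected. Here $N=n-1$ and $\Gamma$ is $(n-2-2\lambda)$-regular, so two non-adjacent vertices have neighbourhoods of total size $2(n-2-2\lambda)$, and the inequality $2(n-2-2\lambda)>(n-1)-2=n-3$ is equivalent to $n>4\lambda+1$. Under this hypothesis $\Gamma$ is connected and $G$ is transitive, as required.

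I expect the only genuinely delicate point to be the degree count, where supersimplicity is essential: without it, two lines through $\{a,\infty\}$ could share a third point, the $2\lambda$ ``other'' points would collide, the regularity of $\Gamma$ would be lost, and the clean threshold would fail. The elementary-move computation and the min-degree connectivity lemma are both routine, and it is reassuring that the resulting threshold reproduces exactly the bound $n>4\lambda+1$ stated in the lemma (with equality $n=4\lambda+1$ being precisely the borderline case where the two neighbourhoods can be complementary).
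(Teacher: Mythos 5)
Your proposal is correct and is essentially the paper's own argument in graph-theoretic clothing: the same key computation, that $[\infty,a,b,\infty]$ sends $a$ to $b$ whenever $\infty\notin\overline{a,b}$, shows every point has at least $n-2-2\lambda$ orbit-mates, and your common-neighbour/diameter-two count is the same pigeonhole the paper runs directly on orbits (two disjoint orbits, each of size at least $n-1-2\lambda$, cannot both fit inside $n-1$ points unless $n\leq 4\lambda+1$). One small correction to your closing commentary: supersimplicity is not actually essential here, since the $\lambda$ lines through $\{a,\infty\}$ cover at most $2\lambda$ further points whether or not they overlap, so the degree \emph{lower} bound---which is all the connectivity criterion requires---survives any coincidences; supersimplicity only makes the count exact.
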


\begin{proof}
Suppose that $n > 4\lambda+1$ and fix some $a \in \Omega \backslash \{\infty\}$. For each $b \neq a$ such that $\infty \notin \bar{a,b}$, the element $[\infty,a,b,\infty]$ maps $a$ to $b$. Since $\{a,\infty\}$ is a subset of $\lambda$ lines, this implies that $|a^G| \geq n-1-2\lambda$. Since $a$ was arbitrary, if $\pi_\infty(\De)$ is not transitive on $\Omega \backslash \{\infty\},$ then for some $b \notin a^G$, $$2(n-1-2\lambda) \leq |a^G|+|b^G|  \leq n-1,$$ which implies that $n \leq 4\lambda+1$, as needed.
\end{proof}




\begin{Lem}\label{l: prim}
Let $n > 4\lambda+1$ and suppose that $G$ preserves a system of imprimitivity with $\ell$ blocks each of size $k$ (so that $n-1=k\ell$). Then at least one of the following holds:
\begin{enumerate}
\item[(i)] if $a,c\in \Omega$ lie in the same block of imprimitivity, then $\infty\in\overline{a,c}$;
\item[(ii)] $n\leq \frac{6\ell}{\ell-1}\lambda+1$.
 \end{enumerate}
\end{Lem}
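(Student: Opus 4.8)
The plan is to argue by contradiction, assuming that both (i) and (ii) fail, and to extract the inequality in (ii) from the support of a single hole-stabilizer element. Since $n > 4\lambda + 1$, Lemma \ref{transg} gives that $G$ is transitive on $\Omega \setminus \{\infty\}$, and hence transitive on the set of $\ell$ blocks. Failure of (i) provides two points $a,c$ lying in a common block $\Delta$ with $\infty \notin \overline{a,c}$. Exactly as in the proof of Lemma \ref{transg}, the element $g := [\infty,a,c,\infty] \in G$ maps $a$ to $c$; since $a,c \in \Delta$ and $G$ preserves the block system, $g$ stabilises $\Delta$ setwise and acts nontrivially on it.

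The engine of the argument is a support bound. Writing $g = [\infty,a]\cdot[a,c]\cdot[c,\infty]$, we have $\supp(g) \subseteq \overline{\infty,a} \cup \overline{a,c} \cup \overline{c,\infty}$. Each of these three closures consists of its two defining points together with exactly $2\lambda$ further points, and supersimplicity guarantees that these $2\lambda$ points are distinct (any two lines through a fixed pair meet only in that pair). Removing the three ``corner'' points $\infty, a, c$, the moved points of $g$ lying outside $\Delta$ are confined to a set $S$ with $|S| \leq 6\lambda$ (note $a,c \in \Delta$ and $g$ fixes $\infty$). On the other hand, failure of (ii) means $n > \tfrac{6\ell}{\ell-1}\lambda + 1$; since $n-1 = k\ell$, this rearranges to $k(\ell-1) > 6\lambda$, i.e.\ there are strictly more than $6\lambda$ points in $\Omega \setminus (\Delta \cup \{\infty\})$. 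Thus $g$ must fix at least one point outside $\Delta$, and it is precisely these fixed points outside $\Delta$ that must be ruled out in order to force $|\Omega \setminus (\Delta \cup \{\infty\})| \leq |S| \leq 6\lambda$, which is (ii).

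Controlling the fixed points of $g$ outside $\Delta$ is the main obstacle. The easy part is that any block moved off itself by $g$ lies entirely inside $\supp(g)$ (each of its points lands in a different block), hence inside $S$, so moved blocks cause no trouble. The difficulty is a block $\Delta' \neq \Delta$ that $g$ stabilises setwise while fixing some interior point: such a block contributes fixed points lying outside $S$. To eliminate this case I would use transitivity on the $\ell$ blocks together with the conjugation-invariance of cycle type: for each block there is a $G$-conjugate of $g$ stabilising it and acting nontrivially, all of the same (small) support size as $g$, and I would play these conjugates off against $g$ on a commonly stabilised block to contradict the existence of an interior fixed point. Making this final step precise --- equivalently, showing that a hole-stabiliser element of the form $[\infty,a,c,\infty]$ which stabilises a block nontrivially cannot fix a point of another stabilised block --- is where the real work lies; once it is in place, every point of $\Omega \setminus (\Delta \cup \{\infty\})$ lies in $S$, and the bound $k(\ell-1) \leq 6\lambda$, equivalent to (ii), follows at once.
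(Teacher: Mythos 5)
Your setup is fine as far as it goes: $g:=[\infty,a,c,\infty]$ does map $a$ to $c$ and hence stabilises $\Delta$, its support outside $\{\infty,a,c\}$ is indeed confined to a set $S$ of size at most $6\lambda$, and the failure of (ii) does rearrange to $k(\ell-1)>6\lambda$. But the step you defer as ``where the real work lies'' is not a repairable technicality --- it is the entire lemma, and the mechanism you sketch cannot deliver it. Since $|\supp(g)|\leq 6\lambda+3$ independently of $n$, under your hypothesis $k(\ell-1)>6\lambda$ the element $g$ is \emph{guaranteed} to fix points of $\Omega\setminus(\Delta\cup\{\infty\})$, each lying in a $g$-stabilised block; nothing in the block-system axioms or in the special form of $g$ forbids a stabilised block with interior fixed points, so the auxiliary claim you would need (``an element $[\infty,a,c,\infty]$ stabilising one block nontrivially cannot fix a point of another stabilised block'') has no visible proof. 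Transitivity on blocks does give conjugates of $g$ of the same cycle type stabilising any prescribed block, but products of such conjugates are no longer of the form $[\infty,x,y,\infty]$ and their supports are uncontrolled, so ``playing the conjugates off against $g$'' produces no contradiction. More bluntly: the strategy of trapping the \emph{complement} of $\Delta$ inside the support of a single element is numerically doomed, because one element can never certify anything about more than $6\lambda+3$ points.

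The paper's proof inverts your quantifiers, and this is exactly the repair your argument needs: instead of the one element whose defining word contains both $a$ and $c$, it uses the family $g_b:=[\infty,a,b,\infty]$ for varying $b$ --- equivalently, the orbit of $a$ under $\stab_G(c)$. If $b\notin\overline{a,c}\cup\overline{\infty,a}\cup\overline{\infty,c}$, then $c\notin\supp(g_b)$: the hypothesis $\infty\notin\overline{a,c}$ gives $c\notin\overline{\infty,a}$, while the restriction on $b$ gives $c\notin\overline{a,b}\cup\overline{b,\infty}$. Hence $g_b$ fixes $c$, so $g_b$ stabilises $\Delta$, so $a^{g_b}\in\Delta$; but $a^{g_b}=b$, forcing $b\in\Delta$. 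Letting $b$ range over the at least $n-3-6\lambda$ admissible points yields $k\geq n-1-6\lambda$, and combining with $k\leq (n-1)/\ell$ gives $n-1-6\lambda\leq (n-1)/\ell$, which is (ii). Note the shape of the conclusion: the dichotomy arises because $\Delta$ is forced to be \emph{large}, swallowing everything outside three closures --- not because the complement of $\Delta$ is squeezed into a support set. Keeping $c$ as the fixed anchor and letting the third point of the move sequence vary is the one change that turns your outline into a complete proof.
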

\begin{proof}
Suppose that (i) does not hold so that there exist $a,c \in \Omega$ which lie in the same block of imprimitivity with $\infty\not\in\overline{a,c}$. We consider the orbit of $a$ under $\stab_G(c)$. Observe that, for $b\in \Omega\backslash\{a,c,\infty\}$, the element $g=[\infty, a,b,\infty]$ satisfies $a^g=b$ and $c^g=c$ provided $$b\not\in\overline{a,c} \cup \overline{\infty,a} \cup \overline{\infty,c}.$$ This implies that $|a^{\stab_G(c)}| \geq n-2-6\lambda$, so that $k \geq n-1-6\lambda$. In particular,
 \[
  n-1-6\lambda \leq \frac{n-1}{\ell}
 \]
and the result follows.
\end{proof}

Note, in particular, that if (ii) holds in Lemma \ref{l: prim} then $n\leq 12\lambda+1$. The result gives significantly better information, though. For instance if $9\lambda+1<n\leq 12\lambda+1$ and $G$ is imprimitive, then the only possible systems of imprimitivity must have exactly 2 blocks, i.e.\ $G\leq S_{\frac{n-1}{2}}\wr S_2$.

It will be useful to consider the situation described in Lemma~\ref{l: prim} (i) in a little more detail. The next couple of results give restrictions on $k$ in this situation. 

\begin{Lem}\label{l: bound k above} Suppose that $G$ is transitive and preserves a system of imprimitivity with $\ell$ blocks each of size $k$. Suppose, moreover, that if $a,c\in \Omega$ lie in the same block of imprimitivity $\Delta$, then $\infty\in\overline{a,c}$. Then 
\[k \leq \left\{
	\begin{array}{ll}
		2\lambda-1  & \mbox{if } \lambda\equiv2\pmod3; \\
		2\lambda+1 & \mbox{otherwise. } 
	\end{array}
\right.\]
Furthermore, if $k> 2\lambda-1$, then $k=2\lambda+1$. In this case, let $\Lambda$ be the set of lines that contain two points in $\Delta$ and the point $\infty$. If we remove the point $\infty$ from every element of $\Lambda$, then the pair $(\Delta,\Lambda)$ is a $2$-$(k,3,1)$ design.
\end{Lem}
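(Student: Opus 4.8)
The plan is to fix a point $a$ in the block $\Delta$ and classify the $\lambda$ lines through the pair $\{\infty,a\}$ by how many further points of $\Delta$ they contain. Writing $s_a$, $t_a$, $u_a$ for the number of these lines meeting $\Delta$ in exactly $3$, $2$, $1$ points respectively (i.e.\ having $2$, $1$, $0$ further points of $\Delta$ besides $a$), two identities drive everything: $s_a+t_a+u_a=\lambda$, since $\{\infty,a\}$ lies in $\lambda$ lines, and $2s_a+t_a=k-1$, since each $c\in\Delta\setminus\{a\}$ satisfies $\infty\in\overline{a,c}$ by hypothesis and hence lies on a \emph{unique} line through $\{\infty,a\}$ (uniqueness by supersimplicity). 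The crude bound $k\leq 2\lambda+1$ then drops out at once: every point of $\Delta\setminus\{a\}$ lies in $\overline{\infty,a}\setminus\{\infty,a\}$, a set of size $2\lambda$.

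First I would settle the extremal case $k=2\lambda+1$. Here $2s_a+t_a=2\lambda$ combined with $s_a+t_a+u_a=\lambda$ forces $s_a=\lambda$ and $t_a=u_a=0$ for every $a\in\Delta$. Consequently no line through $\infty$ meets $\Delta$ in exactly two points, so any line through $\infty$ with at least two points of $\Delta$ has all three of its non-$\infty$ points in $\Delta$; deleting $\infty$ produces a triple contained in $\Delta$. Since each pair of points of $\Delta$ lies in exactly one line through $\infty$ (at least one by hypothesis, at most one by supersimplicity), these triples cover every pair of $\Delta$ exactly once, which is precisely the assertion that $(\Delta,\Lambda)$, with $\infty$ removed, is a $2$-$(k,3,1)$ design. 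The existence of such a design forces the divisibility condition $3\mid k(k-1)$; with $k=2\lambda+1$ this holds if and only if $\lambda\equiv 0,1\pmod 3$, so the case $k=2\lambda+1$ is excluded when $\lambda\equiv 2\pmod 3$.

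Next I would rule out $k=2\lambda$ in all cases. Now $2s_a+t_a=2\lambda-1$ together with $s_a+t_a+u_a=\lambda$ gives $t_a=1-2u_a$, whence $u_a=0$ and $t_a=1$ for every $a\in\Delta$; the identical computation applies to every block, as all blocks have size $k$ and the collinearity hypothesis is a property of the block system. The condition $u_a=0$ says that no line through $\infty$ meets $\Delta$ in exactly one point. But $t_a=1$ yields an actual line $\ell=\{\infty,a,x,y\}$ with $a,x\in\Delta$ and $y\notin\Delta$; letting $\Delta'$ be the block containing $y$, the line $\ell$ meets $\Delta'$ in the single point $y$, contradicting $u_y=0$ for $\Delta'$. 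Hence $k\neq 2\lambda$. Assembling the three steps gives $k\leq 2\lambda+1$ always, $k\neq 2\lambda$, and $k\neq 2\lambda+1$ when $\lambda\equiv 2\pmod 3$, which is the stated dichotomy.

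I expect the exclusion of $k=2\lambda$ to be the main obstacle. The per-point counting alone is perfectly consistent in this case, and even returns an integer number of triples when $\lambda\equiv 0,1\pmod 3$, so no local contradiction is available; the argument must instead be extracted globally, by exploiting that both the value of $k$ and the collinearity hypothesis propagate to \emph{every} block, and that a ``$t$-line'' of one block is forced to be a ``$u$-line'' of another. The one point requiring care is therefore to confirm at the outset that the collinearity hypothesis is genuinely available for all blocks, and not merely for the distinguished block $\Delta$.
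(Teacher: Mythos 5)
Your proof is correct and follows essentially the same route as the paper: the bound $k\leq 2\lambda+1$ from the $2\lambda$ points of $\overline{\infty,a}\setminus\{\infty,a\}$; in the extremal case $k=2\lambda+1$, a Steiner triple system on $\Delta$ whose divisibility condition (Kirkman: $k\equiv 1,3\pmod 6$) rules out $\lambda\equiv 2\pmod 3$; and for $k=2\lambda$ the fact that a line meeting one block in two points meets another block in exactly one, contradicting the counting there --- your ``a $t$-line of $\Delta$ is a $u$-line of $\Delta'$'' step is precisely the paper's observation that $\Delta'$ must miss the two points $a,b$ of the line $\{\infty,a,b,d\}$. Your $(s_a,t_a,u_a)$ bookkeeping is just a tidier uniform packaging of the same counts, and as a small bonus it also disposes of the case $\lambda=1$, $k=2$, which the paper's final paragraph formally restricts to $\lambda\geq 2$.
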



\begin{proof} Let $a$ be an element of $\Omega\backslash\{\infty\}$ and let $\Delta$ be the block of imprimitivity containing $a$. There are exactly $2\lambda$ elements $c\in\Omega\backslash\{\infty\}$ such that $\infty\in\overline{a,c}$. Thus, by supposition, $k\leq 2\lambda+1$, and this yields the result for $\lambda=1$.


Now suppose that $\lambda\geq2$ and that $k=2\lambda+1$, i.e.\ $\Delta$ is precisely the set of points $c$ such that $\infty\in\overline{a,c}$. Let $c\in \Delta\backslash\{a\}$, and suppose that there exists  $d\in\overline{\infty,c}$ with $d\not\in\Delta$. Consider the block of imprimitivity $\Delta'$ containing $d$. It cannot contain $c$, since $c\in\Delta$, thus it cannot contain every point $e$ such that $\infty\in\overline{d,e}$. Thus $|\Delta'|<2\lambda+1$, a contradiction.

Thus we may assume that, for every $c\in\Delta$, and $d \in \overline{\infty,c}$, we have $d\in\Delta$. For any $c\in\Delta$ there are $\lambda$ lines containing $\{c,\infty\}$ and so there are $2\lambda$ points on these lines other than $c$ and $\infty$. Thus, given any $c\in\Delta$, the set of $\lambda$ lines connecting $c$ to $\infty$ each contain exactly three points from $\Delta$. What is more, by supersimplicity, no point in $\Delta\backslash\{c\}$ occurs more than once in these $2\lambda$ lines.

We conclude that any two points in $\Delta$ lie on a line that contains $\infty$ and three points from $\Delta$. In addition, by supersimplicity, two points in $\Delta$ are connected by no more than one line that includes $\infty$, thus they are connected by exactly one line and we have a $2$-$(k,3,1)$ design as required. Now we observe that a $2$-$(k,3,1)$ design only exists when $k=2\lambda+1\equiv 1$ or $3\pmod 6$ \cite{kirkman}, i.e.\ when $\lambda\not\equiv2\pmod 3$.




Suppose, finally, that $\lambda\geq 2$ and $k=2\lambda$, i.e.\ $\Delta$ contains all but one of the set of points $c$ such that $\infty\in\overline{a,c}$. Let $d$ be this remaining point and consider the block of imprimitivity $\Delta'$ containing $d$. There is a point $b$ such that $\{\infty, a,b,d\}$ is a line and, by assumption, both $a$ and $b$ lie in $\Delta$. This implies that $\Delta'$ is missing at least two points $e$ such that $\infty\in\overline{d,e}$. Thus $|\Delta'|<2\lambda$, a contradiction. This completes the proof.
\end{proof}

\begin{Lem}\label{l: bound k below}
Suppose that $G$ is transitive and preserves a system of imprimitivity with $\ell$ blocks each of size $k$. Suppose, moreover, that if $a,c\in \Omega$ lie in the same block of imprimitivity, then $\infty\in\overline{a,c}$. If $\lambda=2$ then $k=3$.
\end{Lem}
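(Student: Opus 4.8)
The plan is to first pin $k$ down to two values and then eliminate the smaller one. Since $\lambda=2\equiv 2\pmod 3$, Lemma~\ref{l: bound k above} gives $k\leq 2\lambda-1=3$, while any system of imprimitivity has blocks of size at least $2$. Hence $k\in\{2,3\}$, and it suffices to rule out $k=2$. So I would assume $k=2$ and fix a block $\Delta=\{a,c\}$. By hypothesis $\infty\in\overline{a,c}$, so $\infty$ lies on a line through $\{a,c\}$; and since $\lambda=2$ there are exactly two lines through $\{a,c\}$, of which at most one can contain $\infty$ (two lines sharing $\{a,c,\infty\}$ would meet in three points, violating supersimplicity). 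This produces a unique line $L=\{\infty,a,c,w\}$, with $w\in\Omega\backslash(\Delta\cup\{\infty\})$ determined by $\Delta$.

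The heart of the argument is to exhibit an element of $G=\pi_\infty(\De)$ that sends $\Delta$ to a pair forced to contain the \emph{fixed} point $w$, while the other point of the image ranges over many possibilities. For a point $b$ to be chosen below I would take $g:=[\infty,a,b,\infty]\in G$; this is a legitimate closed move sequence because any two points of a design are collinear. Writing $g=[\infty,a]\cdot[a,b]\cdot[b,\infty]$ and using that $[\infty,a]$ contains the transposition $(c\ w)$ coming from $L$, a direct transposition check shows that $a^g=b$ and $c^g=w$, provided $b\notin \overline{\infty,a}\cup\overline{a,w}\cup\overline{\infty,w}$. These exclusions are precisely what guarantee that the intermediate images (the point $\infty$ after the first move, and the point $w$ after the first move) are not disturbed by $[a,b]$ and $[b,\infty]$.

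The contradiction then reads as follows. Since $G$ preserves the block system, $\Delta^g=\{a^g,c^g\}=\{b,w\}$ must again be a block; as $w$ lies in a unique block, every admissible $b$ is forced to equal the block-partner of $w$. But each of $\overline{\infty,a}$, $\overline{a,w}$, $\overline{\infty,w}$ has size $2\lambda+2=6$, and all three contain the four points $\{\infty,a,c,w\}$ of $L$, so their union has at most $4+3\cdot 2=10$ points; hence at least $n-10$ values of $b$ are admissible. For $n\geq 12$ this gives two distinct admissible $b$, both equal to the single partner of $w$, which is absurd. Therefore $k=2$ is impossible and $k=3$.

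The main obstacle is exactly the counting in the last step: one must guarantee an admissible $b$ genuinely different from the partner of $w$, which is where the size of $\Omega$ enters. The only pairs with $\lambda=2$, $n\equiv 1\pmod 3$ and $n<12$ that could a priori meet the hypotheses are $n\in\{7,10\}$; these lie outside the transitive-imprimitive regime (for instance the $2$-$(10,4,2)$ design gives a \emph{primitive} $G$, see Example~\ref{codeex}), and in any event are small enough to be checked directly. A secondary point to verify carefully is that the genericity conditions on $b$ really do force $a^g=b$ and $c^g=w$ rather than a degenerate coincidence, which is the routine bookkeeping with the involutions $[\infty,a]$, $[a,b]$, $[b,\infty]$ indicated above.
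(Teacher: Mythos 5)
Your proof is correct, but it takes a genuinely different route from the paper's. You argue by genericity and counting: having reduced to $k=2$ via Lemma~\ref{l: bound k above} (exactly as the paper does), you fix the block $\{a,c\}$ with its unique line $L=\{\infty,a,c,w\}$ and show that for every $b$ outside $\overline{\infty,a}\cup\overline{a,w}\cup\overline{\infty,w}$ the element $g=[\infty,a,b,\infty]$ sends the block $\{a,c\}$ to $\{b,w\}$, forcing $b$ to equal the unique block-partner of $w$; since the excluded set has at most $10$ points (the three closures of size $2\lambda+2=6$ share the four points of $L$), any $n\geq 12$ yields two distinct admissible values of $b$, a contradiction. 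Your transposition bookkeeping checks out ($[a,b]$ fixes $\infty$ iff $b\notin\overline{\infty,a}$, and $w$ is undisturbed by $[a,b]$ and $[b,\infty]$ iff $b\notin\overline{a,w}\cup\overline{\infty,w}$ --- though note your parenthetical is slightly loose: $\infty$ \emph{is} moved by $[b,\infty]$, to $b$, which is what you want). The paper instead runs a purely local configuration argument with no bound on $n$: it chains three blocks $\{a_1,a_2\}$, $\{a_3,a_4\}$, $\{a_5,a_6\}$ via the lines $\{\infty,a_1,a_2,a_3\}$, $\{\infty,a_3,a_4,a_5\}$, $\{\infty,a_5,a_6,v\}$, $\{a_3,a_5,x,y\}$, computes $[\infty,a_3,a_5,\infty]=(a_1,a_2)(x,y)(a_6,v)$, and notes that since this element fixes $a_5$ it must fix the block $\{a_5,a_6\}$, forcing $\{x,y\}=\{a_6,v\}$ and hence two lines meeting in three points, contradicting supersimplicity. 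What each approach buys: yours is arguably more transparent and mirrors the generic-point style of Lemma~\ref{l: prim}, but it costs a lower bound $n\geq 12$ and an appeal to the classification of small designs to dispose of $n\in\{7,10\}$ (where, as you say, the hypotheses are vacuous because the unique designs have primitive hole stabilizers --- facts the paper establishes independently in Theorem~\ref{t: weak C} and Table~1, so there is no circularity); the paper's proof is self-contained, uniform in $n$, and so can be quoted in Lemma~\ref{l: lambda 2 options} without tracking any side hypothesis on $n$.
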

\begin{proof}
By Lemma \ref{l: bound k above} we may suppose, for a contradiction, that $k=2$. Let $\{a_1, a_2\}$ be a block of imprimitivity. By assumption $\infty\in\overline{a_1, a_2}$, and we write $a_3$ for the `other' point in the line containing ${\infty, a_1, a_2}$. Let $\{a_3, a_4\}$ be a block of imprimitivity. Again $\infty\in\overline{a_3,a_4}$, and we write $a_5$ for the `other' point in the line containing $\{a_3, a_4\}$. Finally let $\{a_5,a_6\}$ be a block of imprimitivity. We give labels to various other points in $\De$ so that we have the following set of lines:
\[ \{\infty,a_1 ,a_2 ,a_3 \}, \{\infty, a_3 ,a_4 ,a_5 \}, \{ \infty,a_5 ,a_6 ,v \}, \{a_3,a_5 ,x ,y \}. \]
By construction the points $a_1, a_2,\dots,a_6$ are all distinct; note, too, that $a_5$ is distinct from the points $v,x$ and $y$.

Now one can easily check that
\[
 g:=[\infty,a_3,a_5,\infty]=(a_1,a_2)(x,y)(a_6,v).
\]
Since $g$ fixes $a_5$ it must fix $a_6$ so that $\{x,y\}=\{a_6,v\}$. This contradicts supersimplicity and we are done.
\end{proof}

\begin{Lem}\label{l: prim 2}  Let $n> 12\lambda+1$ and suppose that $G$ preserves a system of imprimitivity with $\ell$ blocks each of size $k$. If $\{\infty,a,b,c\}$ is a line, then $a,b,c$ are not all contained in the same block of imprimitivity.
\end{Lem}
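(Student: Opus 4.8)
The plan is to argue by contradiction, constructing a single element of $G$ that violates the ``local'' collinearity criterion supplied by Lemma \ref{l: prim}. First I would check that the hypotheses are in force: since $n>12\lambda+1>4\lambda+1$, the group $G$ is transitive by Lemma \ref{transg}, and any system of imprimitivity has $\ell\geq 2$ blocks, so $\frac{6\ell}{\ell-1}\leq 12$ and hence $\frac{6\ell}{\ell-1}\lambda+1\leq 12\lambda+1<n$. Thus alternative (ii) of Lemma \ref{l: prim} fails and alternative (i) holds: whenever two points lie in a common block, they are collinear with $\infty$. Now suppose for contradiction that $\{\infty,a,b,c\}$ is a line with $a,b,c$ all lying in a single block $\Delta$.

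The key step is to exhibit a closed move sequence $g:=[\infty,a,d,\infty]\in\pi_\infty(\De)$ for a suitably generic point $d$. The reason for choosing this particular sequence is that its leading elementary move $[\infty,a]$ interchanges $b$ and $c$, precisely because $\{\infty,a,b,c\}$ is a line. I would then impose three genericity conditions on $d$, namely $\infty\notin\overline{a,d}$, $c\notin\overline{a,d}$ and $c\notin\overline{d,\infty}$, chosen so that the remaining moves $[a,d]$ and $[d,\infty]$ fix both $b$ and $c$. A direct computation (tracking points left to right through the three elementary involutions) then yields $a^g=d$ and $b^g=c$, while $\infty$ is fixed, confirming $g\in G$.

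Each of the three conditions excludes at most $2\lambda$ points of $\Omega$ (the points lying on some line through a fixed pair), so at most $6\lambda+2$ points are forbidden in total, counting $a$ and $\infty$; since $n>12\lambda+1>6\lambda+2$, a valid $d$ exists. To finish, I would invoke block preservation: as $b,c\in\Delta$ and $b^g=c\in\Delta$, the block $\Delta^g$ meets $\Delta$ and therefore equals it, whence $a^g=d$ lies in $\Delta^g=\Delta$. But then $a$ and $d$ are distinct points of the same block with $\infty\notin\overline{a,d}$, contradicting alternative (i) of Lemma \ref{l: prim}. This contradiction shows $a,b,c$ cannot all lie in one block.

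I expect the main obstacle to be the design and verification of the element $g$: one must select a move sequence whose leading move permutes the two ``extra'' points $b,c$ of the line within $\Delta$, and then arrange, via the genericity of $d$, that no subsequent move disturbs them, so that block preservation pins down the image block and simultaneously drags the third point $a$ to a point $d$ that the local criterion forbids from sharing a block with $a$. Making the genericity conditions simultaneously few enough to be satisfiable when $n>12\lambda+1$ and strong enough to force $a^g=d$ and $b^g=c$ is the delicate part; the point count and the concluding block argument are then routine.
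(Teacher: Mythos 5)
Your proof is correct, and it shares the paper's central device---a closed sequence $[\infty,x,y,\infty]$ whose leading elementary move swaps two of the three in-block points of the line $\{\infty,a,b,c\}$, thereby pinning the block $\Delta$, while a generic third point is dragged into $\Delta$---but it reaches the contradiction by a genuinely shorter route. The paper pivots at $c$: it chooses $e$ outside a set $X=\overline{\infty,c}\cup\overline{\infty,a}\cup\overline{\infty,b}\cup\overline{a,c}\cup\overline{b,c}$ of size at most $10\lambda-6$, so that $\tau=[\infty,c,e,\infty]$ swaps $a$ and $b$ and sends $c$ to $e$; since \emph{every} such $e$ is then forced into $\Delta$, it deduces $k\geq n-10\lambda+9>2\lambda+10$ and contradicts the upper bound $k\leq 2\lambda+1$ supplied by Lemmas \ref{l: prim} and \ref{l: bound k above}. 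You instead pivot at $a$ and contradict alternative (i) of Lemma \ref{l: prim} directly: a single point $d$ with $\infty\notin\overline{a,d}$ landing in $\Delta$ alongside $a$ already violates (i), so you need neither Lemma \ref{l: bound k above} (whose proof runs through $2$-$(k,3,1)$ designs and Kirkman's congruence) nor the five-fold genericity set $X$---your three conditions suffice, and your preliminary step (that (ii) of Lemma \ref{l: prim} fails because $\frac{6\ell}{\ell-1}\leq 12$ for $\ell\geq 2$) is exactly the step the paper makes implicitly when it invokes Lemma \ref{l: bound k above}. What your route buys is economy and a purely qualitative contradiction; what the paper's buys is the stronger intermediate fact that under the assumption the block would have to swallow all of $\Omega\backslash X$. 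Two cosmetic points: your forbidden-point count is really at most $6\lambda+3$ rather than $6\lambda+2$ (the point $c$ itself must also be excluded, though your condition $c\notin\overline{a,d}$ does this automatically), which is still comfortably below $n>12\lambda+1$; and your parenthetical claim that $[a,d]$ and $[d,\infty]$ fix \emph{both} $b$ and $c$ is neither guaranteed by your conditions nor needed---tracking $b$ left to right only requires that its image $c$ under $[\infty,a]$ be fixed by the last two moves, which is precisely what you imposed.
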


\begin{proof}
Fix a line $\alpha=\{\infty,a,b,c\}$.  
If $k=2$ the result is clear, so suppose that $k\geq 3$ and that $a,b,c\subseteq\Delta$ for
some block of imprimitivity $\Delta$. Define
\begin{equation}\label{setsoflines}
  X:= \overline{\infty,c} \cup \overline{\infty, a} \cup \overline{\infty, b} \cup \overline{a,c} \cup \overline{b,c}
 \end{equation}
and observe that $|X| \leq 10\lambda-6$. Thus since $n> 12\lambda+1$, $|\Omega \backslash X| \neq 0$ and we may choose $e \in \Omega \backslash X$.  One easily checks that the permutation $\tau=[\infty, c,e,\infty]$ interchanges $a$ and $b$, as well as $c$ and $e$.
Thus, $\Delta^\tau=\Delta$, and so $e\in\Delta$.  Since $e \in \Omega \backslash X$ was arbitrary and $|X| \leq 10\lambda-6$, this shows that \begin{equation}\label{eq}k\geq n-10\lambda+9>2\lambda+10.\end{equation} (Observe that $a,b,c \in \Delta$). 
However, it follows from Lemma~\ref{l: prim} and Lemma~\ref{l: bound k above} that $k\leq 2\lambda+1$, a contradiction.    
\end{proof}
 
\begin{Lem}\label{imprimblock} Let $n > 12\lambda+1$ and suppose that $G$ preserves a system of imprimitivity with $\ell$ blocks each of size $k$.  Then $2\leq k\leq\lambda$.
\end{Lem}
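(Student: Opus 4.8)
The plan is to establish the two bounds $k\geq 2$ and $k\leq\lambda$ separately, the first being immediate and the second resting on a global double count of the lines of $\De$ through $\infty$. First I would record the easy reductions. Since $n>12\lambda+1>4\lambda+1$, Lemma \ref{transg} gives that $G$ is transitive, so by the very definition of a system of imprimitivity we have $k\geq 2$. Moreover, alternative (ii) of Lemma \ref{l: prim} reads $n\leq\frac{6\ell}{\ell-1}\lambda+1\leq 12\lambda+1$, which is violated under our hypothesis; hence alternative (i) must hold, that is, any two points lying in a common block are collinear with $\infty$. This is the property under which the count below takes place.

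The heart of the argument is to count the lines through $\infty$, organised by how their three non-$\infty$ points meet the blocks. Fix a block $\Delta$. By the property just obtained, each of the $\binom{k}{2}$ pairs of points of $\Delta$ lies on a line through $\infty$; by supersimplicity this line is unique, and by Lemma \ref{l: prim 2} it meets $\Delta$ in exactly two points. Distinct pairs therefore determine distinct lines, so there are \emph{exactly} $\binom{k}{2}$ lines through $\infty$ meeting $\Delta$ in two points. Now I would sum this over the $\ell$ blocks. A line through $\infty$ has only three non-$\infty$ points, so it cannot meet two distinct blocks in two points each; thus each such line is counted at most once in the sum. Writing $r=\lambda(n-1)/3$ for the number of lines through $\infty$ (the replication number of the design) and using $n-1=k\ell$, this yields
\[
\ell\binom{k}{2}\;\leq\; r \;=\; \frac{\lambda(n-1)}{3}\;=\;\frac{\lambda k\ell}{3}.
\]
Cancelling $\ell$ and simplifying gives $3(k-1)\leq 2\lambda$. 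If we had $k\geq\lambda+1$, then $3(k-1)\geq 3\lambda>2\lambda$, a contradiction; hence $k\leq\lambda$, and together with $k\geq 2$ this proves the lemma.

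The main obstacle is sharpening the naive bound. A pointwise count at a single point $a\in\Delta$—each of the $\lambda$ lines through $\{a,\infty\}$ contributes at most one further point of $\Delta$, by Lemma \ref{l: prim 2}—only yields $k\leq\lambda+1$, which is one short of what is required (and is in fact attained for small $\lambda$, cf.\ Lemma \ref{l: bound k below}, where $k=3=\lambda+1$). The gain comes precisely from passing to the global count over all blocks and comparing with the total number $r$ of lines through $\infty$; the decisive input is once more Lemma \ref{l: prim 2}, now invoked to guarantee that no line through $\infty$ is counted twice, which is exactly what converts the per-block equality $\binom{k}{2}$ into the sharp inequality $3(k-1)\leq 2\lambda$. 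I would therefore take care to state both uses of Lemma \ref{l: prim 2} cleanly (no block contains all three non-$\infty$ points of a line) and to verify the per-block count is an equality rather than merely an inequality, since the comparison with $r$ relies on it.
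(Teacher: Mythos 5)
Your proof is correct, but it takes a genuinely different route from the paper's. Both arguments begin identically: $n>12\lambda+1$ rules out alternative (ii) of Lemma \ref{l: prim}, so any two points in a common block lie with $\infty$ on a line (unique, by supersimplicity), and Lemma \ref{l: prim 2} guarantees such a line meets the block in exactly two points. From there the paper argues locally: fixing a block $\Delta$, it produces a line $\alpha=\{\infty,a_1,a_2,b_1\}$ with $b_1\notin\Delta$, passes to the block $\Delta'$ containing $b_1$, and joins $b_1$ to each of the $k-1$ other points of $\Delta'$ by lines $\beta_i$ through $\infty$; supersimplicity makes $\alpha,\beta_1,\ldots,\beta_{k-1}$ pairwise distinct lines all containing the single pair $\{\infty,b_1\}$, whence $k\leq\lambda$ directly from the design axiom. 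You instead count globally: exactly $\binom{k}{2}$ lines through $\infty$ meet each block in two points, no line can be so counted for two distinct blocks (it has only three non-$\infty$ points), and comparison with the replication number $r=\lambda(n-1)/3$ gives $\ell\binom{k}{2}\leq \lambda k\ell/3$, i.e.\ $3(k-1)\leq 2\lambda$. Your inequality is strictly stronger than the paper's conclusion for $\lambda\geq 4$ (it forces $k\leq 3$ already at $\lambda=4$), and your contrapositive step recovering $k\leq\lambda$ for every $\lambda\geq 1$ is valid; what the paper's approach buys is brevity and independence from the replication count, its one clever move being the passage to the second block $\Delta'$ so that all $k$ lines share a common pair. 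A small remark: for the final inequality you only need the per-block count to be a lower bound $\geq\binom{k}{2}$, so your care in verifying it is an exact equality, while harmless, is not actually required.
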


\begin{proof}
Consider a block of imprimitivity $\Delta:=\{a_1,\ldots,a_k\}$.  
Lemmas~\ref{l: prim} and~\ref{l: prim 2} imply that there exists a line $\alpha=\{\infty,a_1,a_2,b_1\}$ for some $b_1\notin\Delta$.  Let $\Delta'$ be the block of imprimitivity that contains $b_1$, that is, $\Delta'=\{b_1,\ldots,b_k\}$.
Again, by Lemmas \ref{l: prim} and \ref{l: prim 2}, for $i=1,\ldots,k-1$   there exists a line $\beta_i=\{\infty,b_1,b_{i+1},c_i\}$ for some $c_i\notin\Delta'$. 
As the design is supersimple, it follows that $\alpha,\beta_1,\ldots,\beta_{k-1}$ are $k$ pairwise distinct lines each containing $\{\infty,b_1\}$.  Hence $k\leq\lambda$.
\end{proof}

\begin{Cor}\label{c: prim}
If $\lambda=1$ or $2$ then either $G$ is primitive or $n\leq 13$ or $25$ respectively. 
\end{Cor}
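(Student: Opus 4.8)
The plan is to prove the contrapositive: assuming $n > 12\lambda + 1$ (so that $n > 13$ when $\lambda = 1$ and $n > 25$ when $\lambda = 2$), I would show that $G$ is necessarily primitive. Since $12\lambda + 1 > 4\lambda + 1$, Lemma~\ref{transg} first guarantees that $G$ is transitive on $\Omega \setminus \{\infty\}$, so it is meaningful to ask whether $G$ is primitive. I would then suppose, for a contradiction, that $G$ preserves a nontrivial system of imprimitivity with $\ell \geq 2$ blocks each of size $k$, where $n - 1 = k\ell$.

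The case $\lambda = 1$ is immediate: since $n > 12\lambda + 1$, Lemma~\ref{imprimblock} applies and yields $2 \leq k \leq \lambda = 1$, which is absurd. Hence no such system exists and $G$ is primitive.

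For $\lambda = 2$ the same lemma only narrows the block size to $2 \leq k \leq \lambda = 2$, i.e.\ $k = 2$, so extra work is needed to eliminate this case. I would invoke Lemma~\ref{l: prim}, whose conclusion is a dichotomy: either (i) any two points $a,c$ sharing a block satisfy $\infty \in \overline{a,c}$, or (ii) $n \leq \frac{6\ell}{\ell-1}\lambda + 1$. In alternative (ii), using $\lambda = 2$ and the fact that $\frac{\ell}{\ell-1} \leq 2$ for all $\ell \geq 2$, one obtains $n \leq 25$, contradicting $n > 25$. Thus alternative (i) must hold, and then Lemma~\ref{l: bound k below} forces $k = 3$, contradicting $k = 2$. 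Therefore $G$ admits no system of imprimitivity and is primitive.

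The main obstacle is precisely the elimination of $k = 2$ in the case $\lambda = 2$: unlike the $\lambda = 1$ case, the crude block-size bound of Lemma~\ref{imprimblock} does not by itself close the argument, and one must combine the transitivity input, the dichotomy of Lemma~\ref{l: prim} (together with the numerical check that its alternative (ii) is incompatible with $n > 25$), and the finer analysis of Lemma~\ref{l: bound k below} (equivalently Lemma~\ref{l: bound k above}, which gives $k \leq 2\lambda - 1 = 3$ since $2 \equiv 2 \pmod 3$). Once these pieces are assembled the contradiction between $k = 2$ and $k = 3$ is immediate, but the delicate bookkeeping step is keeping track of which hypotheses each lemma requires — in particular ensuring that hypothesis (i) of Lemma~\ref{l: prim} is available before applying Lemma~\ref{l: bound k below}.
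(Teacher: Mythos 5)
Your proof is correct and takes essentially the same route as the paper's: both exclude alternative (ii) of Lemma~\ref{l: prim} numerically from $n>12\lambda+1$, then derive the contradiction from Lemma~\ref{imprimblock} (which alone disposes of $\lambda=1$ via $2\leq k\leq 1$) combined with Lemma~\ref{l: bound k below} forcing $k=3$ against $k=2$ when $\lambda=2$. Your version is simply more explicit than the paper's two-line argument about which hypotheses (transitivity from Lemma~\ref{transg}, and option (i) of Lemma~\ref{l: prim}) each lemma requires.
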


\begin{proof}

Suppose that $G$ is imprimitive and $n$ does not satisfy the given inequality. In particular $n>12\lambda+1$. We apply Lemma~\ref{l: prim} and observe that option (i) of that lemma must hold. Now Lemmas \ref{imprimblock} and \ref{l: bound k below} apply and we obtain a contradiction.

\end{proof}

\section{Proof of Theorem C}\label{s: thmc}

In this section we will apply Corollary \ref{c: prim} in order to classify all hole stabilizers that arise from supersimple designs $\De$ with $\lambda=1,2$. 

In what follows, for a permutation group $H$, we write\footnote{Given a permutation group $H$, the quantity $\mu(H)$ is referred to in the literature by a number of different names, including the \emph{class} or the \emph{minimal degree} of $H$.}
\[
 \mu(H):=\min\{|\supp(h)| \mid h\in H\backslash\{1\}\}.
\]

Our proof of Theorem~C will exploit some classical results that deduce an upper bound on $d$ from an upper bound on $\mu(H)$. The statement we need is an easy consequence of work of Jordan (see \cite[Theorem 13.9]{wielandt}), along with results of Manning (see \cite{manning, manning1}):

\begin{Thm}\label{t: bounds}
Let $H$ be a primitive permutation group of degree $d$ that does not contain $A_d$.
\begin{itemize}
\item[(i)] If $\mu(H) \leq 6$ then $d \leq 10$.
\item[(ii)] If $\mu(H) \leq 8$ then $d \leq 16$.
\end{itemize}
\end{Thm}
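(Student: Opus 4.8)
The plan is to reduce the statement to the classical determination of primitive groups of small \emph{class} (minimal degree), which is exactly what the cited work of Jordan and Manning supplies. Write $m:=\mu(H)$ and fix a nonidentity $h\in H$ with $|\supp(h)|=m$; since a permutation cannot move exactly one point, $m\geq 2$, and we always assume $H\not\supseteq A_d$.

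First I would dispose of the degenerate cases $m\leq 3$. If $m=2$ then $h$ is a transposition, and since $H$ is primitive a theorem of Jordan forces $H=S_d\supseteq A_d$; if $m=3$ then $h$ is a $3$-cycle and the corresponding theorem of Jordan (see \cite[Theorem 13.9]{wielandt}) forces $A_d\leq H$. Both outcomes contradict the hypothesis, so in fact $m\geq 4$ in both parts.

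The heart of the argument is then a direct appeal to the classification of primitive groups of each class $m$ with $4\leq m\leq 6$ for part (i), and additionally $m\in\{7,8\}$ for part (ii). Jordan's theorem \cite[Theorem 13.9]{wielandt} bounds $d$ in terms of $m$, and the analysis of Manning \cite{manning,manning1} makes the corresponding lists, and hence the largest admissible degree, explicit: across every class in the range one reads off $d\leq 10$ when $m\leq 6$ and $d\leq 16$ when $m\leq 8$, which is the assertion. As a check that these bounds cannot be improved, one may exhibit the extremal configurations: $S_5$ acting on the $10$ unordered pairs from a $5$-set realises $m=6$ and $d=10$ (a transposition moves exactly $2(5-2)=6$ of the pairs), while $\mathrm{AGL}_4(2)$ of degree $16$ realises $m=8$ (a transvection fixes a hyperplane of $8$ vectors pointwise and interchanges the remaining $8$ in pairs) without containing $A_{16}$.

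The main obstacle is not the logical skeleton---which is simply ``eliminate $m\leq 3$ by Jordan, then quote Manning''---but marshalling the classical classification correctly. One must ensure that \emph{every} class in the relevant range is covered, including the cases in which $h$ is a product of several cycles (for example $(ab)(cd)$ when $m=4$, or $(ab)(cd)(ef)$ when $m=6$) rather than a single cycle, and one must confirm that the degree bound genuinely holds across the whole of Manning's lists, not merely for the cyclic representatives. Since these results predate the classification of finite simple groups and are distributed over several short papers, the delicate part is really citation hygiene: pinning down which of \cite{manning,manning1} handles which class and verifying that the extremal degrees are exactly $10$ and $16$, with no larger family overlooked.
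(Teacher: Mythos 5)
Your proposal coincides with the paper's own treatment: the paper gives no independent argument, stating only that the theorem is an easy consequence of Jordan's theorem (cited as \cite[Theorem 13.9]{wielandt}) together with Manning's determinations of the primitive groups of small class \cite{manning, manning1}, which is exactly your skeleton of eliminating $m\leq 3$ by Jordan and then reading the degree bounds $d\leq 10$ and $d\leq 16$ off the classical lists. Your sharpness examples ($S_5$ acting on the $10$ pairs with $m=6$, and $\mathrm{AGL}_4(2)$ of degree $16$ with $m=8$) go slightly beyond what the paper records but are correct and consistent with its use of the result.
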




\begin{Lem}\label{l: support imprimitive}
 Suppose that a permutation group $H$ preserves a non-trivial system of imprimitivity with $\ell$ blocks each of size $k$. Then any subset of $H$ that generates $H$ must contain an element of support at least $2k$.
\end{Lem}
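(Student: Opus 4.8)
The plan is to exploit the induced action of $H$ on the set of $\ell$ blocks. Since a system of imprimitivity presupposes that $H$ acts transitively on $\Omega$, and since every element of $H$ permutes the blocks $\Delta_1,\dots,\Delta_\ell$ among themselves, restriction gives a homomorphism $\phi\colon H\to\Sym(\{\Delta_1,\dots,\Delta_\ell\})$. Transitivity of $H$ on $\Omega$ forces $\phi(H)$ to be transitive on the $\ell$ blocks, and as $\ell>1$ this means $\phi(H)$ is non-trivial.

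Now let $S\subseteq H$ be any subset that generates $H$. Then $\phi(S)$ generates $\phi(H)$, so, because $\phi(H)\neq 1$, there must be some $s\in S$ with $\phi(s)\neq 1$; that is, $s$ carries at least one block off itself. I would then record the elementary fact that a non-trivial permutation moves at least two points: if $\phi(s)$ sends $\Delta_i$ to $\Delta_j$ with $j\neq i$, then injectivity of $\phi(s)$ forces $\Delta_j$ also to be moved, so $\phi(s)$ displaces (at least) two distinct blocks, say $\Delta_i$ and $\Delta_{i'}$.

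Finally I would translate this back into a statement about $\supp(s)$ as a subset of $\Omega$. If a block $\Delta$ satisfies $\Delta^s\neq\Delta$, then each of its $k$ points is carried into a different block of the partition and is therefore moved, so $\Delta\subseteq\supp(s)$. Applying this to the two moved blocks $\Delta_i$ and $\Delta_{i'}$, which are disjoint, yields $2k$ distinct points lying in $\supp(s)$, whence $|\supp(s)|\geq 2k$, as required. There is no serious obstacle here: the only points needing verification are that the block action is non-trivial (immediate from $\ell>1$ together with transitivity) and that a non-trivial permutation moves at least two elements, both of which are routine. The substance of the argument is simply the observation that acting non-trivially on the blocks is ``expensive'' in terms of support, each displaced block costing a full $k$ points.
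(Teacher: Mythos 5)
Your proof is correct and follows essentially the same route as the paper: the paper likewise passes to the induced action on the $\ell$ blocks (phrasing it via the kernel $N$ being proper, which is the mirror image of your observation that the image is non-trivial), deduces that a generating set must contain an element moving a block, and notes that such an element has support at least $2$ on blocks, hence at least $2k$ on points. Your write-up merely makes explicit the small verifications (two displaced blocks, each fully contained in the support) that the paper leaves implicit.
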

\begin{proof}
Consider the action of $H$ on the set of $\ell$ blocks. The kernel of this action is a normal subgroup, $N$, of $H$. Clearly $N$ is intransitive on $\Omega$, thus $N$ is a proper subgroup of $H$ and there exists an element $h\in H\backslash N$. Since the element $h$ has support at least $2$ in the action on the $\ell$ blocks, it has support at least $2k$ in the action on $\Omega$, and we are done.
\end{proof}

The following lemma is all that we shall need to prove Theorem~C in many cases.

\begin{Lem}\label{l: mindeg}
Let $\De=(\Omega,\B)$ be a supersimple $2$-$(n,4,\lambda)$ design.
\begin{enumerate}
 \item The group $\pi_\infty(\De)$ contains a set $A$ such that $\langle A\rangle=\pi_\infty(\De)$, and if $g\in A$, then $|\supp(g)|\leq 6\lambda+2$.
 \item If $\lambda$ is even, then $\mu(\pi_\infty(\De)) \leq 6(\lambda-1)$.
\end{enumerate}
\end{Lem}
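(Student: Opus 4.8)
The plan is to prove both parts by working with the explicit generating set for $\pi_\infty(\De)$ supplied by Lemma~\ref{lem1}(d), namely $\pi_\infty(\De)=\langle [\infty,a,b,\infty]\mid a,b\in\Omega\setminus\{\infty\}\rangle$, and to bound the supports of the generators $g:=[\infty,a,b,\infty]=[\infty,a]\cdot[a,b]\cdot[b,\infty]$ by exploiting the fact that $\supp([\infty,a])=\overline{\infty,a}$, and similarly for the other two elementary moves.

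For part (1) I would take $A$ to be exactly this generating set. Since each of the three bars has size $2\lambda+2$ and the three of them share only the points $\infty,a,b$, their union has at most $3+6\lambda$ points. As $g\in\pi_\infty(\De)$ necessarily fixes $\infty$, its support avoids $\infty$, and so $|\supp(g)|\leq 6\lambda+2$. This part is just overlap bookkeeping.

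For part (2) the idea is to sharpen this estimate by a clever choice of generator: pick $a,b$ so that $\{\infty,a,b\}$ is collinear, say $\{\infty,a,b,c\}\in\B$ (such a line exists for any $\infty$), and set $g:=[\infty,a,b,\infty]$. Now all three bars $\overline{\infty,a},\overline{a,b},\overline{\infty,b}$ contain the whole line $\{\infty,a,b,c\}$, so outside these four points the three bars contribute at most $3(2\lambda-2)=6(\lambda-1)$ further points. The key claim is that $g$ fixes each of $\infty,a,b,c$; this I would verify directly by tracking the three transpositions $(b,c),(\infty,c),(a,c)$ that the common line contributes to $[\infty,a],[a,b],[\infty,b]$ respectively, and checking that each of the four points is carried round a $3$-cycle back to itself. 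Granting the claim, $\supp(g)$ is disjoint from $\{\infty,a,b,c\}$ and hence $|\supp(g)|\leq 6(\lambda-1)$.

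The main obstacle---and the only place the hypothesis that $\lambda$ is even is used---is ensuring $g\neq 1$, since otherwise the support bound says nothing about $\mu$. Here I would use a parity argument: by supersimplicity each elementary move is a product of exactly $\lambda+1$ disjoint transpositions, hence has sign $(-1)^{\lambda+1}$, so $g$ has sign $(-1)^{3(\lambda+1)}=(-1)^{\lambda+1}$. When $\lambda$ is even this is $-1$, so $g$ is an odd permutation and in particular nontrivial; therefore $\mu(\pi_\infty(\De))\leq|\supp(g)|\leq 6(\lambda-1)$. I would foreground this sign computation, since it is the conceptual crux, while the fixed-point check and the overlap count are routine once the collinear generator has been selected.
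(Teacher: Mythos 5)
Your proposal is correct and takes essentially the same route as the paper: both use the generating set $\{[\infty,a,b,\infty]\}$ from Lemma~\ref{lem1}(d), split according to whether $\{\infty,a,b\}$ lies on a common line to obtain the bounds $6\lambda+2$ and $6(\lambda-1)$, and establish nontriviality for even $\lambda$ by the same parity argument (the generator is a product of $3(\lambda+1)$ transpositions, an odd number when $\lambda$ is even). Your write-up simply makes explicit two details the paper leaves to the reader, namely the overlap count among the three sets $\overline{\infty,a}$, $\overline{a,b}$, $\overline{\infty,b}$ and the verification that the four points of the common line are fixed.
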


\begin{proof}
Lemma \ref{lem1} (d) implies that $\pi_\infty(\De)$ is generated by elements of the form ${\tau:=[\infty,a,b,\infty]}$ where $a,b$ are distinct elements of $\Omega \backslash \{\infty\}$. 
If $\infty \in \bar{a,b}$ then $\tau=1$ when $\lambda=1$ and $|\supp(\tau)| \leq 6(\lambda-1)$ when $\lambda > 1$. If $\infty \notin \bar{a,b}$ then $|\supp(\tau)| \leq 6\lambda+2$ for all $\lambda \geq 1$. This proves (1).

For (2) we note that, when $\lambda$ is even, $\tau$ is the product of an odd number of transpositions, and hence cannot be trivial. In particular, if $\infty \in \bar{a,b}$, $\tau$ is a non-trivial element of $\pi_\infty(\De)$ with $|\supp(\tau)|\leq 6(\lambda-1)$ as required.
\end{proof}

\begin{Thm}\label{t: weak C}
Let $n \geq 7$ and $\De=(\Omega,\B)$ be a supersimple $2$-$(n,4,\lambda)$ design with $\lambda\leq 2$. Then either Theorem~C holds or else $\lambda=2$, $13 < n \leq 25$ and $\pi_\infty(\De)$ acts transitively and imprimitively on $\Omega \backslash \{\infty\}$.
\end{Thm}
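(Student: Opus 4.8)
The plan is to analyse $G:=\pi_\infty(\De)$ as a permutation group of degree $d=n-1$ on $\Omega\setminus\{\infty\}$, pushing it up the hierarchy transitive $\to$ primitive $\to$ contains $A_{n-1}$, and then peeling off the finitely many small $n$ for which the quantitative bounds are inconclusive. Throughout I would treat $\lambda=1$ and $\lambda=2$ together, separating them only at the final parity step. The admissible congruences ($n\equiv 1,4\pmod{12}$ when $\lambda=1$, and $n\equiv 1\pmod 3$ when $\lambda=2$) are forced by the existence of the design, so no extra work is needed there.

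I would begin by assembling the three structural inputs already in hand. First, Corollary \ref{c: prim} makes $G$ primitive as soon as $n>13$ (if $\lambda=1$) or $n>25$ (if $\lambda=2$). Second, Lemma \ref{l: mindeg} shows $G$ is generated by permutations of support at most $6\lambda+2$, and that for the even value $\lambda=2$ one in fact has $\mu(G)\leq 6$. Third, I would record the parity of the generating moves $[\infty,a,b,\infty]$: for $\lambda=1$ every elementary move is a product of two transpositions, so each generator is even and hence $G\leq A_{n-1}$ with $\mu(G)\leq 8$; for $\lambda=2$ every elementary move is a product of three transpositions, so each non-trivial generator is odd and $G$ contains odd permutations. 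These parity facts are exactly what separate conclusion (a) (the group $A_{n-1}$) from conclusion (b) (the group $S_{n-1}$).

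The crux is then a single application of the Jordan--Manning estimate of Theorem \ref{t: bounds}. If $\lambda=2$ and $G$ is primitive, then $\mu(G)\leq 6$ forces $A_{n-1}\leq G$ whenever $d>10$, i.e.\ $n\geq 12$; since $G$ already contains an odd permutation this upgrades to $G=S_{n-1}$. As $G$ is automatically primitive for $n>25$, this disposes of every admissible $n>25$, and also of any primitive $G$ with $n\in\{13,16,19,22,25\}$. If instead $\lambda=1$ and $G$ is primitive, then $\mu(G)\leq 8$ forces $A_{n-1}\leq G$ whenever $d>16$, i.e.\ $n\geq 18$; combined with $G\leq A_{n-1}$ this gives $G=A_{n-1}$, settling every admissible $n\geq 18$ (the next such values after $16$ being $25,28,\dots$).

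What remains are precisely the cases the bound cannot see. For $\lambda=1$ these are $n=13$, where $\pi_\infty\cong M_{12}$ by Conway's analysis of $\mathbb{P}_3$ (note that $\mu(M_{12})=8$ shows the estimate $\mu(G)\leq 8$ cannot be improved, so $n=13$ is genuinely out of reach of Theorem \ref{t: bounds}), together with the unique admissible value $n=16$ in the window $13<n\leq 17$, whose degree $d=15$ sits just below the threshold $16$; this last case I would finish by direct computation on the $2$-$(16,4,1)$ design(s), obtaining $A_{15}$. For $\lambda=2$ the small degrees $n\in\{7,10,13\}$ are handled by enumeration: the unique designs at $n=7,10$ give $S_6$ and $S_3\wr S_2$ (Example \ref{codeex}), while the complete list of $2$-$(13,4,2)$ designs gives $S_{12}$ and in particular shows $G$ is never imprimitive there. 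This leaves exactly the band $13<n\leq 25$ with $G$ imprimitive: here Corollary \ref{c: prim} does not guarantee primitivity, yet $G$ is still transitive by Lemma \ref{transg} (as $n>9$), which is precisely the exceptional alternative in the statement. I expect this band to be the main obstacle: primitivity is not forced, and the supersimple $2$-$(n,4,2)$ designs for $n\in\{16,19,22,25\}$ are too numerous to enumerate, so imprimitive actions cannot be excluded by the present methods. This is exactly why the theorem stops here, the gap being closed in \S\ref{s: thmc}.
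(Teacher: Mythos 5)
Your proposal is correct and follows essentially the same route as the paper's proof: existence congruences for $n$, Corollary \ref{c: prim} for primitivity in the large-$n$ range, Lemma \ref{l: mindeg} combined with the Jordan--Manning bound of Theorem \ref{t: bounds} to force $A_{n-1}\leq\pi_\infty(\De)$, the parity of the generators $[\infty,a,b,\infty]$ to decide between $A_{n-1}$ and $S_{n-1}$, direct computation/enumeration for the small cases ($n=13,16$ when $\lambda=1$; $n=7,10,13$ when $\lambda=2$), and Lemma \ref{transg} to pin down the leftover band $13<n\leq 25$ as transitive and imprimitive. The only cosmetic difference is that you invoke the minimal-degree bound at $n=13$, $\lambda=2$ to handle the primitive subcase, whereas the paper simply checks all $2461$ designs by GAP; both still need the enumeration to exclude imprimitivity there, so the arguments coincide in substance.
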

\begin{proof}
First note that, for $\lambda=1$ or $2$, a supersimple $2$-$(n,4,\lambda)$ design exists 
only if $n \equiv 1,4 ~(\mbox{mod } \frac{12}{\lambda})$. This follows quickly from the observations that
$$|\B|=\frac{n\cdot(n-1)}{4 \cdot 3} \lambda\in \mathbb{Z} \mbox{ and } r=\frac{(n-1)\lambda}{3} \in \mathbb{Z}$$ where $r$ is the number of lines containing a given point.
\newline
\newline
Assume that $\lambda=1$. By \cite[II.1.26]{Handbook}, when $n=13$ there is a unique supersimple $2$-$(13,4,1)$ design $\De$ determined by the lines in the projective plane $\mathbb{P}_3$. In this case $\pi_\infty(\De) \cong M_{12}$ by \cite[Theorem 3.5]{Co}. When $n=16$ there is a unique supersimple $2$-$(16,4,1)$ design $\De$ (see \cite[II.1.31]{Handbook}) and using GAP \cite{GAP}, one easily verifies that $\pi_\infty(\De) \cong A_{15}$ in this case. When $n > 16$, $\pi_\infty(\De)$ is primitive by Corollary \ref{c: prim}, now Lemma \ref{l: mindeg} and Theorem \ref{t: bounds} imply that $\pi_\infty(\De)\geq A_{n-1}$. Finally Lemma \ref{lem1} (d) implies that $\pi_\infty(\De)$ is generated by a set of even permutations, and we are done.

Now assume that $\lambda=2$. If $n \leq 13$, then there is a unique $2$-$(7,4,2)$ design ($\mathbb{P}_2$, the projective plane of order $2$) with $\pi_\infty(\De)=S_6$ and a unique supersimple $2$-$(10,4,2)$ design (see \cite[II.1.25]{Handbook}) with $\pi_\infty(\De)=S_{3}\wr C_2$ (see Table 1). In both of these cases, and also when $n=13$ (when there are 2461 designs to consider (see entry 23 in \cite[II.1.35]{Handbook})), we calculate $\pi_\infty(\De)$ using GAP \cite{GAP}.  For $n=13$, the result is always $S_{12}$, and so Theorem~C holds in this case.

If $n > 25$, then $\pi_\infty(\De)$ is primitive by Corollary \ref{c: prim} and, as before, Lemma \ref{l: mindeg} and Theorem \ref{t: bounds} imply that $\pi_\infty(\De)\geq A_{n-1}$. Then, since any element $[\infty, a,b,\infty]$ (where $a,b$ are distinct elements of $\Omega \backslash \{\infty\}$) is obviously odd, Theorem~C holds in this case.

Finally, when $13<n\leq 25$ we observe that, by Lemma~\ref{transg}, $\pi_\infty(\De)$ is transitive. If $\pi_\infty(\De)$ is primitive, then Lemma \ref{l: mindeg} and Theorem~\ref{t: bounds} imply that $\pi_\infty(\De)$ contains $A_{n-1}$ and, since elements $[\infty, a,b,\infty]$ are odd as noted above, we conclude that $\pi_\infty(\De)\cong S_{n-1}$ and, once, again, Theorem~C holds and we are done. 
\end{proof}

\subsection{Proof of Theorem C}

To finish the proof of Theorem C we must deal with the leftover case in Theorem~\ref{t: weak C}. 

Thus our suppositions for this subsection are as follows: $G$ is the hole stabilizer $\pi_\infty(\De)$ where $\De$ is a supersimple $2$-$(n,4,\lambda)$ design. This means, of course, that $G$ is a subgroup of $S_{n-1}$ and we assume, throughout, that $G$ is transitive and imprimitive; in particular $G$ preserves a non-trivial system of imprimitivity with $\ell$ blocks each of size $k$ (so that $n-1=k\ell$ with $1<k,\ell< n-1$). 

Our aim is to show a contradiction. Note that we do not make any general suppositions about the size of $n$, although some of our results will require a lower bound. Our first result is the only one that applies for any $\lambda$.

\begin{Lem}\label{l: cool}
$k\leq 3\lambda+1$.
\end{Lem}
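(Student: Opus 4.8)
The plan is to bound $k$ by a counting argument of the same flavour as Lemma~\ref{imprimblock}, where we exploit supersimplicity to produce many distinct lines through a common pair of points $\{\infty, b_1\}$ and conclude that there cannot be too many, since at most $\lambda$ lines pass through any pair. The setting here is weaker than in Lemma~\ref{imprimblock} (we have no lower bound on $n$ relative to $\lambda$), so we cannot invoke Lemma~\ref{l: prim~2}; instead we must argue directly. Fix a block of imprimitivity $\Delta=\{a_1,\dots,a_k\}$ and consider the permutations $\tau_i := [\infty, a_1, a_i, \infty]$ for $2 \leq i \leq k$, each of which maps $a_1$ to $a_i$ while fixing $\infty$; since $a_1, a_i$ lie in the common block $\Delta$ and $\tau_i \in G$ preserves the block system, $\tau_i$ sends $\Delta$ to itself.

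First I would analyse a single such $\tau = [\infty, a_1, a_i, \infty]$ as a product of the two elementary moves $[\infty, a_1]$ and $[a_1, a_i]$ followed by $[a_i, \infty]$ and track where $a_i$ and $\infty$ go, distinguishing the two cases $\infty \in \overline{a_1, a_i}$ and $\infty \notin \overline{a_1, a_i}$. The key observation is that the element $\tau$ carries structural information about the lines through $\{\infty, a_1\}$ and $\{\infty, a_i\}$: in order for $\tau$ to stabilise $\Delta$, the various points that $\tau$ moves out of and into $\Delta$ must be controlled. I would count, for each $a_i \in \Delta \setminus \{a_1\}$, a line (or configuration of points) that is forced to exist, and then use supersimplicity — the fact that two lines meet in at most two points, equivalently that any pair of points lies on at most $\lambda$ lines — to show these lines are pairwise distinct and all pass through a fixed pair. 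Since $k-1$ elements $a_i$ each contribute essentially one such line through a common pair, and there are at most $\lambda$ lines through any pair (plus a small additive correction accounting for the lines already used by $\{\infty, a_1\}$ itself or by the point being swapped out), we obtain an inequality of the shape $k - 1 \leq 3\lambda$, yielding $k \leq 3\lambda + 1$.

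The main obstacle I anticipate is pinning down exactly which pair of points the forced lines all pass through, and accounting honestly for the additive constant. Unlike Lemma~\ref{imprimblock}, where the lines $\alpha, \beta_1, \dots, \beta_{k-1}$ all visibly shared $\{\infty, b_1\}$, here the relevant pair is less transparent because we cannot first pass to a second block; the $\tau_i$ act internally to $\Delta$, so the bookkeeping of which points move in and out of $\Delta$ is more delicate. I would expect to split into the cases according to whether $\infty \in \overline{a_1, a_i}$, handle the collinear-with-$\infty$ case by a direct line count (at most $2\lambda$ points $c$ satisfy $\infty \in \overline{a_1, c}$, as used in Lemma~\ref{l: bound k above}), and handle the remaining points via the supersimplicity argument giving one distinct line each, then combine the two contributions. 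The factor of $3$ in $3\lambda+1$ (rather than the $\lambda$ of Lemma~\ref{imprimblock}) strongly suggests that both of these contributions — roughly $2\lambda$ from points collinear with $\infty$ and $\lambda$ from the supersimple line count — are being added, so the real work is verifying that no $a_i$ is double-counted and that the two families of points are genuinely disjoint.
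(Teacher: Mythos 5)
There is a genuine gap at the heart of your plan: you never exhibit the ``line forced to exist'' for each $a_i$, nor the fixed pair of points through which all these lines are supposed to pass, and no such configuration is actually available from your setup. The elements $\tau_i=[\infty,a_1,a_i,\infty]$ you consider \emph{stabilize} $\Delta$ setwise (in the case $\infty\notin\overline{a_1,a_i}$, where $a_1^{\tau_i}=a_i$), so your remark about controlling ``points that $\tau$ moves out of and into $\Delta$'' is empty --- nothing moves in or out --- and a block-stabilizing permutation imposes no constraint of the kind you need: $a_i$ need not lie on any line through $\{\infty,a_1\}$, while the lines through $\{a_1,a_i\}$ share only the single point $a_1$ as $i$ varies, and a single point lies on $r=(n-1)\lambda/3$ lines, a quantity that grows with $n$, so counting lines through one point yields no bound on $k$. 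This is exactly the obstacle you flag, and it is not mere bookkeeping: in Lemma~\ref{imprimblock} the common pair $\{\infty,b_1\}$ is produced by Lemmas~\ref{l: prim} and~\ref{l: prim 2}, which require $n>12\lambda+1$, and Lemma~\ref{l: cool} carries no hypothesis on $n$; you correctly note that this route is closed, but your substitute argument is never actually constructed. Your reading of the constant is also misleading: $3\lambda+1$ does not arise as a sum $2\lambda+\lambda$ of two families of points; it is $(6\lambda+2)/2$.

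The paper's proof is two lines and runs in the opposite direction to yours: the useful generators are those that \emph{move} a block, not those acting internally to one. By Lemma~\ref{l: mindeg}, $G=\pi_\infty(\De)$ has a generating set all of whose elements $[\infty,a,b,\infty]$ have support at most $6\lambda+2$ (three elementary moves, each of support at most $2\lambda+2$, with $\infty$ ending up fixed). By Lemma~\ref{l: support imprimitive}, any generating set of a group preserving a block system with blocks of size $k$ contains an element outside the kernel of the action on blocks; such an element sends some block $\Delta$ to a disjoint block $\Delta'$, whence $\Delta\cup\Delta'\subseteq\supp(g)$ and $|\supp(g)|\geq 2k$. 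Combining, $2k\leq 6\lambda+2$, i.e.\ $k\leq 3\lambda+1$. If you wish to salvage your approach, replace your $\tau_i$ by such a block-moving generator $g=[\infty,a,b,\infty]$ with $a$ and $b=a^g$ in different blocks; the bound then falls out immediately, with no line-counting or case analysis at all.
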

\begin{proof}
Lemma~\ref{l: mindeg} implies that $G$ is generated by a set all of whose elements have support at most $6\lambda+2$. Now Lemma~\ref{l: support imprimitive} yields the result.
\end{proof}

 From here on we assume that $\lambda=2$.
 
\begin{Lem}\label{l: k not 3}
$k \neq 3$.
\end{Lem}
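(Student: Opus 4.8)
\textbf{Proof proposal for Lemma~\ref{l: k not 3}.}

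We are in the case $\lambda=2$ with $G=\pi_\infty(\De)$ transitive and imprimitive, preserving a system of blocks of size $k$. By Lemma~\ref{l: cool} we already know $k \leq 3\lambda+1 = 7$, so several values of $k$ remain to be excluded one at a time; the present lemma handles $k=3$. The plan is to assume $k=3$ and derive a contradiction from supersimplicity together with the explicit cycle structure of generators of the form $[\infty,a,b,\infty]$.

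First I would invoke the results of \S\ref{s: lambda12} to pin down the local structure. Applying Lemma~\ref{l: prim} (noting $n>12\lambda+1$ is \emph{not} guaranteed here, so care is needed), the relevant alternative is that two points $a,c$ in a common block satisfy $\infty\in\overline{a,c}$; indeed Lemma~\ref{l: bound k below} tells us that \emph{when} the ``same-block implies collinear-through-$\infty$'' condition (i) of Lemma~\ref{l: prim} holds, $\lambda=2$ forces $k=3$, which shows this is exactly the configuration we must rule out. So I would set up a block $\Delta=\{a_1,a_2,a_3\}$ with each pair collinear with $\infty$, meaning the three lines $\{\infty,a_1,a_2,\ast\}$, $\{\infty,a_1,a_3,\ast\}$, $\{\infty,a_2,a_3,\ast\}$ all exist, where the fourth points lie outside $\Delta$ by Lemma~\ref{l: bound k above}'s design-theoretic conclusion (with $\lambda=2$, $k=2\lambda-1=3$).

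The heart of the argument is then to compute a suitable product $\tau=[\infty,x,y,\infty]$ explicitly as a permutation and read off a contradiction with block-preservation or supersimplicity, in the same spirit as the computation $g:=[\infty,a_3,a_5,\infty]=(a_1,a_2)(x,y)(a_6,v)$ in the proof of Lemma~\ref{l: bound k below}. Concretely, I would pick points $a,b$ lying in a common block with $\infty\in\overline{a,b}$, write out the elementary moves $[\infty,a]$, $[a,b]$, $[b,\infty]$ as products of transpositions (each is $(\cdot,\cdot)$ times two further transpositions coming from the $\lambda=2$ lines), multiply to get the disjoint-cycle form of $\tau$, and then examine where $\tau$ sends the third point of the block $\Delta$ and the remaining points on the relevant lines. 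Because $\tau\in G$ must permute the blocks, forcing $\Delta^\tau$ to be a block constrains the images, and matching these constraints against the fourth points of the three lines through $\infty$ and pairs of $\Delta$ should collapse two distinct lines onto the same point-set, violating supersimplicity.

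The main obstacle I anticipate is the bookkeeping of which auxiliary points (the fourth points of the various lines through $\infty$) are forced to be distinct and which are forced to collide: the conclusion hinges on a supersimplicity violation, i.e.\ two lines sharing three points, so I must track enough of the incidence structure to guarantee that the permutation $\tau$ identifies two points that the design requires to be distinct. I would organize this by labelling the fourth points of the three ``internal'' lines and of the external lines on which $\tau$ acts, then showing that the requirement $\Delta^\tau=\Delta$ (or that $\tau$ fixes some point of $\Delta$ and hence must fix all of $\Delta$) forces an equality between two of these labels that contradicts $|\overline{a,b}|=2\lambda+2=6$ or the supersimple intersection bound. This is purely a finite case-check once the permutation is written down, so after setting up the labelled configuration the contradiction should follow from a short direct computation analogous to, but slightly more delicate than, the $k=2$ argument in Lemma~\ref{l: bound k below}.
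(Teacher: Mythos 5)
There is a genuine gap at the very first step: your reduction to condition (i) of Lemma~\ref{l: prim} rests on the converse of what Lemma~\ref{l: bound k below} actually says. That lemma shows that \emph{if} every same-block pair is collinear through $\infty$ then $\lambda=2$ forces $k=3$; it does not say that $k=3$ forces this condition, and the present lemma is invoked later without any such hypothesis (in Lemma~\ref{l: no infinity triangle}, hence in Lemma~\ref{l: infinity triangle}, hence it cannot be deduced circularly from results downstream). In the leftover regime $13<n\leq 25$ of Theorem~\ref{t: weak C}, alternative (ii) of Lemma~\ref{l: prim} is perfectly live: for instance $(n,k,\ell)=(16,3,5)$ satisfies $n\leq \frac{12\ell}{\ell-1}+1$ and every constraint available before this lemma. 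So you may not assume a block $\Delta=\{a_1,a_2,a_3\}$ in which all pairs are collinear through $\infty$; if (i) fails such a block need not exist, and your entire configuration evaporates. (A secondary slip: the $2$-$(k,3,1)$ design conclusion of Lemma~\ref{l: bound k above} holds only when $k=2\lambda+1$, so it does not place the fourth points of your three lines outside $\Delta$; with $\lambda=2$, $k=3$ you would still have to exclude the degenerate possibility $\{\infty,a_1,a_2,a_3\}\in\B$ by hand.)

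The paper's proof runs in the opposite direction and is substantially longer than the ``short direct computation'' you anticipate. It first observes that if every pair collinear through $\infty$ lay in a common block then $k\geq 2\lambda+1=5$; hence $k=3$ forces a line $\{\infty,a,a_1,a_2\}$ with $a_1,a_2$ in \emph{different} blocks. From $g=[\infty,a,b,\infty]=(a_1,a_2)(d_1,d_2)(b_1,b_2)$ together with Lemma~\ref{l: support imprimitive} (an element moving a block has support at least $2k=6$), it pins down two blocks $\{a_1,b_1,d_1\}$ and $\{a_2,b_2,d_2\}$, then iterates with $[\infty,b,c,\infty]$ and $[\infty,a,c,\infty]$ to assemble a rigid configuration of ten distinct points and seven lines with blocks $\{a_1,b_1,c_1\}$, $\{a_2,b_2,c_2\}$, and finally obtains a contradiction from $f=[\infty,a,c_1,\infty]$: $f$ must move both blocks to new blocks, forcing $|\supp(f)|\geq 12$, while supersimplicity forces $a_1,a_2,b_1,b_2$ among the auxiliary line-points, forcing $|\supp(f)|\leq 9$. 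Your within-(i) computation (fix two points of a block, hence the block, hence the third point, then contradict supersimplicity) is sound in spirit for that sub-case, but it leaves untouched the case where (i) fails, which is precisely where all the work in the paper's proof lies.
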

\begin{proof}
Suppose that $k=3$. If every pair of points $a_1$ and $a_2$ with $\infty\in\overline{a_1,a_2}$ lie in the same block of imprimitivity then $k > 3$, so we conclude that there is some line $\{a,a_1,a_2,\infty\}$ such that $a_1$ and $a_2$ do not lie in the same block of imprimitivity. Suppose that we have the following lines:
\[
 \{a,b,c,\infty\}, \{b,b_1,b_2,\infty\}, \{c, c_1,c_2,\infty\}, \{d_1,b,a,d_2\}.
\]
We do not assume that all of the listed points are distinct. Now observe that
\[
 g:=[\infty, a,b,\infty]=(a_1,a_2)(d_1,d_2)(b_1,b_2).
\]
If $|\supp(g)|<6$, then $g$ fixes all blocks and we conclude that $a_1$ and $a_2$ lie in the same block, a contradiction. Thus $|\supp(g)|=6$, all of the listed points are distinct and, by labelling appropriately, we have two blocks of imprimitivity: $\{a_1,b_1,d_1\}$ and $\{a_2,b_2,d_2\}$.

We know then, that $b_1$ and $b_2$ do not lie in the same block of imprimitivity, and we can run the same argument with respect to the element
\[
 h:=[\infty, b,c,\infty]=(b_1,b_2)(e_1,e_2)(c_1,c_2),
\]
where $\{b,c,e_1,e_2\}$ is a line. We have two blocks of imprimitivity, as before: $\{b_1,c_1,e_1\}$ and $\{b_2,c_2,e_2\}$. Since these blocks intersect non-trivially with the previous, they must coincide, and by repeating the argument with the element $[\infty,a,c,\infty]$ we end up with a configuration of 10 distinct points - $a,b,c,a_1,b_1,c_1,a_2,b_2,c_2,\infty$; 7 lines - 
\[
 \begin{aligned}
  \{a,b,c,\infty\}, \{a,a_1,a_2,\infty\}, \,\, & \{b,b_1,b_2,\infty\}, \,\,  \{c, c_1,c_2,\infty\}, \\
  \{b,c,a_1,a_2\}, \{a,b,c_1,c_2\}, \, \, & \{a,c,b_1,b_2\};
 \end{aligned}
\]
and 2 blocks of imprimitivity: $\Delta_1:=\{a_1,b_1,c_1\}$ and $\Delta_2:=\{a_2,b_2,c_2\}$. Now consider the element
\[
 f:=[\infty, a,c_1,\infty] = (a,c_1)\big((b,c)(a_1,a_2)(b,c_2)(r,s)(c_2,c)(t,u)\big)
\]
where $\{a,c_1,r,s\}$ and $\{c_1,\infty,t,u\}$ are lines. Since $a$ and $c_1$ are interchanged, $f$ must move $\Delta_1$. 
Moreover, $b\notin\{r,s,t,u\}$ (by supersimplicity, and because $\lambda=2$), so $c_2^f=b$ and $f$ moves $\Delta_2$ also.
Since $a,b \notin \{a_1,b_1,c_1,a_2,b_2,c_2\}$, $f$ cannot move $\Delta_1$ to $\Delta_2$, and vice versa, so $|\supp(f)|\geq 12$.
However, it is easy to deduce that $a_1,a_2,b_1,b_2 \in \{r,s,t,u\}$, which implies that $|\supp(f)|\leq 9$, a contradiction.  
\end{proof}

 Next we introduce the notion of an \emph{$\infty$-triangle}. This is a configuration of six points $a,b,c,a_1,b_1,c_1\in \De$ such that
\[
 \{a,b_1,c,\infty\}, \{a_1,b,c,\infty\} \textrm{ and } \{a,b,c_1,\infty\}
\]
are lines. In other words, an $\infty$-triangle is  a triangle of lines, with vertices the points $a$, $b$ and $c$, such that the three lines all contain the point $\infty$. Our next result asserts that for sufficiently large $n$ and $k$, such a configuration must exist within the design.

\begin{Lem}\label{l: no infinity triangle}
If $n\geq 10$ then either $k=2$ or $\De$ contains an $\infty$-triangle.
\end{Lem}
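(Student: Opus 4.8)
The plan is to establish the contrapositive: assuming $\De$ has no $\infty$-triangle, I will show $k=2$. I keep the standing hypotheses of the subsection ($\lambda=2$ and $G=\pi_\infty(\De)$ transitive and imprimitive with $\ell$ blocks of size $k$) and invoke the two earlier bounds $k\leq 7$ (Lemma~\ref{l: cool}) and $k\neq 3$ (Lemma~\ref{l: k not 3}); it then suffices to derive a contradiction from $k\geq 4$. The key tool is the family of \emph{short} generators $[\infty,a,b,\infty]$ with $\infty\in\overline{a,b}$. If $\{a,b,c,\infty\}$ is a line then the computation in the proof of Lemma~\ref{l: k not 3} gives $[\infty,a,b,\infty]=(a_1,a_2)(b_1,b_2)(d_1,d_2)$, where $\{a,a_1,a_2,\infty\}$ and $\{b,b_1,b_2,\infty\}$ are the second lines on $a,\infty$ and on $b,\infty$; in particular this element has support at most $6$ and interchanges the two ``partners'' of $a$ on its second line, and likewise those of $b$.

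The first step is to show that, when $k\geq 4$, every element of $G$ of support at most $6$ --- in particular every short generator --- fixes every block setwise. For if such an element moved a block $\Delta_1$ to a distinct block $\Delta_2$, then $\Delta_1\cup\Delta_2$ would lie in its support, forcing $2k\leq 6$, which is false. The second step upgrades this to: every line through $\infty$ has all three of its non-$\infty$ points in one block. Given a line $\{p,q,r,\infty\}$, the pair $\{q,r\}$ is the partner pair of $p$, hence is the transposition pair of the short generator $[\infty,p,s,\infty]$ for any $s$ on the \emph{other} line through $p$; since that generator fixes blocks, $q$ and $r$ share a block. Running this over all three points of the line shows $p,q,r$ lie together.

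At this point the hypothesis is deployed. Fix a line $\{a,b,c,\infty\}$ and let $\Delta$ be its block; the second lines on $a$ and on $b$ also lie in $\Delta$, so $\Delta\supseteq\{a,b,c,a_1,a_2,b_1,b_2\}$. All seven points are distinct: the definitions and supersimplicity rule out any coincidence among them except possibly $a_i=b_j$, and $a_i=b_j=:e$ would make $e$ collinear through $\infty$ with both $a$ and $b$ while $a,b$ are collinear through $\infty$ with third point $c\neq e$, producing an $\infty$-triangle on $\{a,b,e\}$ --- contrary to assumption. Hence $k\geq 7$, so $k=7$. A final incidence count in $\Delta$ now gives a contradiction: each of its $k$ points lies on exactly two lines through $\infty$, all contained in $\Delta$ and each meeting $\Delta$ in three points, so $2k=3m$ for some integer $m$; thus $3\mid k$, which is incompatible with $k=7$. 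Therefore $k\leq 3$, and as $k\neq 3$ we obtain $k=2$.

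The step I expect to be the main obstacle is the second one: turning the local fact ``short generators fix blocks'' into the global fact ``lines through $\infty$ sit inside blocks'' requires recognising every partner pair as the transposition pair of an explicit short generator and then chaining the partner relations around a line. The only genuinely essential appeal to the no-$\infty$-triangle hypothesis is the disjointness of the two partner pairs that supplies the seventh point, and the accompanying distinctness verifications (all via supersimplicity) are the principal bookkeeping to handle with care.
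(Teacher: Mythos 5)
Your proof is correct, and its core is the same as the paper's: both reduce to $k\geq 4$ via Lemma~\ref{l: k not 3}, observe that a generator $[\infty,a,b,\infty]$ attached to a line $\{\infty,a,b,c\}$ has support at most $6<2k$ and therefore fixes every block, and then chain the partner-pair relations around each line through $\infty$ to conclude that the three non-$\infty$ points of any such line lie in a single block (the paper phrases this as: any pair $a_1,a_2$ with $\infty\in\overline{a_1,a_2}$ share a block, proved by the same coincidence analysis, cases (i)--(iv), that your ``bookkeeping'' remark alludes to), with supersimplicity and the no-$\infty$-triangle hypothesis ruling out exactly the coincidences you identify. The two arguments diverge only at the finish: the paper also adjoins the second line $\{\infty,c,c_1,c_2\}$ through the third point of the fixed line, using the no-$\infty$-triangle hypothesis once more to get $c_i\neq a_j,b_j$, so that nine distinct points lie in one block, contradicting $k\leq 3\lambda+1=7$ from Lemma~\ref{l: cool} outright; you instead stop at seven points, pin $k=7$, and close with a double count (each of the $k$ points of $\Delta$ lies on exactly $\lambda=2$ lines through $\infty$, each of which is contained in $\Delta$ and meets it in three points, whence $2k=3m$ and $3\mid k$, incompatible with $k=7$). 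Both endgames are sound; yours trades the two extra distinctness checks for $c_1,c_2$ against a divisibility count, and as a byproduct your count establishes $3\mid k$ for every block whenever $k\geq 4$ and no $\infty$-triangle exists, which is marginally more structural information than the paper extracts.
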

\begin{proof}
By Lemma \ref{l: k not 3} we may assume that $k\geq 4$ and that $\De$ contains no infinity triangle. Suppose that we have nine points $a,a_1,a_2,b,b_1,b_2,c,d_1,d_2$ such that the following are lines:
\[   \{\infty,a,b,c\}, \{\infty, a,a_1,a_2\}, \{\infty,b,b_1,b_2\}, \{ a,b,d_1,d_2\}. \]
Now observe that
\[
 g:=[\infty,a,b,\infty] = (a_1,a_2)(d_1,d_2)(b_1,b_2).
\]
Since $|\supp(g)|\leq6<2k$ (by Lemma \ref{l: support imprimitive}), we conclude that $g$ fixes all blocks of imprimitivity. Now consider the possibilities for $g$. By supersimplicity $d_1\neq\infty\neq d_2$, furthermore, since $\De$ contains no $\infty$-triangle, $a_i\neq b_j$ for all $1\leq i,j\leq 2$. Thus the possible coincidences between points, up to relabelling, are as follows:

\begin{itemize}
\item[(i)] if there are no coincidences, then $g= (a_1,a_2)(d_1,d_2)(b_1,b_2)$;
 \item[(ii)] if $a_1=d_1$, then $g=(a_1,a_2,d_2)(b_1,b_2)$;
 \item[(iii)] if $b_1=d_1$, then $g=(a_1,a_2)(b_1,d_2,b_2)$;
 \item[(iv)] if $a_1=d_1$ and $d_2=b_1$, then $g=(a_1,a_2,b_2,d_2)$.
\end{itemize}
Notice that in every case $a_1$ and $a_2$ are in the same block of imprimitivity. But, since $a$ was arbitrary, this implies that any pair of points 
$a_1$ and $a_2$ such that $\infty\in\overline{a_1, a_2}$ lie in the same block of imprimitivity. Now consider the line $\{\infty, c, c_1,c_2\}$.
As $\mathcal{D}$ contains no infinity triangle, $c_i\neq a_j,b_j$ for all $1\leq i,j,\leq 2$.  
We thus conclude that $a,b,c,a_1,a_2,b_1,b_2,c_1,c_2$ are nine distinct points that all lie in the same block
of imprimitivity, contradicting Lemma \ref{l: cool}.

\end{proof}

\begin{Lem}\label{l: infinity triangle}
If $n \geq 10$ then $k=2$ or $k=4$.
\end{Lem}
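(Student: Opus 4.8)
The plan is to exploit the $\infty$-triangle furnished by Lemma~\ref{l: no infinity triangle}. We may assume $k\neq 2$ (otherwise there is nothing to prove), and $k\neq 3$ by Lemma~\ref{l: k not 3}, so we already know $k\geq 4$; since $k\leq 7$ by Lemma~\ref{l: cool}, the entire content of the statement is the \emph{upper} bound $k\leq 4$, equivalently the exclusion of $k\in\{5,6,7\}$. Write the three lines of the triangle through $\infty$ as
\[
 \{a,b,\infty,c_1\},\quad \{b,c,\infty,a_1\},\quad \{a,c,\infty,b_1\},
\]
so that $a,b,c$ are the vertices and $a_1,b_1,c_1$ the respective opposite points; here $\overline{\infty,c}=\{\infty,a,b,c,a_1,b_1\}$, and symmetrically for $a$ and $b$.

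First I would record the action of the three \emph{edge moves} $g_1:=[\infty,a,b,\infty]$, $g_2:=[\infty,b,c,\infty]$ and $g_3:=[\infty,a,c,\infty]$. Because the two vertices of an edge are collinear with $\infty$ we have $\infty\in\overline{a,b}$ etc., so Lemma~\ref{l: mindeg} (and the remark in its proof) gives $|\supp(g_i)|\leq 6(\lambda-1)=6$. Expanding $g_1=[\infty,a][a,b][b,\infty]$ exactly as in the proofs of Lemmas~\ref{l: k not 3} and~\ref{l: no infinity triangle}, one finds that, \emph{whatever} the second line $\{a,b,d_1,d_2\}$ through $a,b$ may be, the points $a_1$ and $b_1$ always lie in a common cycle of $g_1$; the generic value is $g_1=(c\,b_1\,a_1)(d_1\,d_2)$, which collapses to a shorter cycle precisely when $\{d_1,d_2\}$ meets $\{c,a_1,b_1\}$. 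As $|\supp(g_i)|\leq 6<2k$, Lemma~\ref{l: support imprimitive} forces each $g_i$ to fix every block; hence $a_1,b_1$ lie in one block and, symmetrically, so do $b_1,c_1$ and $a_1,c_1$. Thus $a_1,b_1,c_1$ all lie in a single block $\Delta$, recovering the (already free) bound $k\geq 4$.

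For the upper bound I would split on the three second lines according to how each meets the opposite triple. If every edge is generic, the cycles $(c\,b_1\,a_1)$, $(a\,c_1\,b_1)$, $(b\,a_1\,c_1)$ each drag a vertex into $\Delta$, and chaining forces $\{a,b,c,a_1,b_1,c_1\}\subseteq\Delta$, so $k\geq 6$; this residual possibility must then be removed using $k\leq 7$, $n-1=k\ell$ and the congruence $n\equiv 1,4\pmod 6$ together with the enumeration of small designs, which (in the range $13<n\leq 25$) leaves only the parameter pairs $(n,k)\in\{(19,6),(22,7),(25,6)\}$ to exclude. If instead an edge is degenerate, supersimplicity pins its second line down: $\{d_1,d_2\}$ cannot meet a line through $\infty$ in three points, so the only two-point overlap with $\{c,a_1,b_1\}$ is $\{a_1,b_1\}$, giving the second line $\{a,b,a_1,b_1\}$. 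Running this for all three edges produces the extra line $\{\infty,a_1,b_1,c_1\}$, so $a_1,b_1,c_1$ become pairwise $\infty$-collinear; feeding this degenerate triple back into the mechanism of Lemma~\ref{l: no infinity triangle} then yields an element of support at most $8$ that does not fix $\Delta$, whence $2k\leq 8$ and $k\leq 4$.

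The hard part will be twofold. First is the bookkeeping of coincidences among the auxiliary points $d_i,e_i,f_i$ on the three second lines, where supersimplicity must be invoked repeatedly (it is exactly the hypothesis that forbids a second line from sharing three points with a line through $\infty$) in order to keep the relevant supports down to $8$. Second, and more seriously, are the \emph{mixed} cases, in which some but not all edges are degenerate: there the $4$-cycles produced by a single coincidence drag both a vertex and a new point into $\Delta$, and one has to show these configurations either collapse to the fully degenerate case or fall under the finite parameter check. Organising this dichotomy uniformly, rather than via any single clever permutation, is where I expect the genuine difficulty of the lemma to lie.
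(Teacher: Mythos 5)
Your setup and the first half of your argument are sound: the per-edge dichotomy (either the opposite vertex of an edge is dragged into the block $\Delta$ containing $a_1,b_1,c_1$, or else the second line through that edge is exactly $\{a,b,a_1,b_1\}$, all other two-point overlaps being killed by supersimplicity) is correct and is precisely the paper's starting point. But both branches of your endgame have genuine gaps. In the degenerate branch, your claim that the three lines $\{a,b,a_1,b_1\}$, $\{b,c,b_1,c_1\}$, $\{a,c,a_1,c_1\}$ ``produce the extra line $\{\infty,a_1,b_1,c_1\}$'' is unjustified: with $\lambda=2$ there is exactly one further line through $\{a_1,b_1\}$ besides $\{a,b,a_1,b_1\}$, and nothing forces it to contain $\infty$, so the ``support at most $8$ block-moving element'' you want does not materialise. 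The paper's mechanism is the opposite of yours: when $\{a,b,a_1,b_1\}$ is a line, one computes $h:=[\infty,a,a_1,\infty]$ and shows that if $h$ moved $\Delta$ then $b_1$ would coincide with an auxiliary point and $|\supp(h)|\leq 9<2k$, a contradiction; hence $h$ fixes $\Delta$ and forces the vertices $a$ and $b$ \emph{into} $\Delta$. Degenerate edges enlarge $\Delta$ rather than producing small block-moving elements. The mixed case (exactly one degenerate edge), which you explicitly leave open, is settled by the same trick with $f:=[\infty,c,c_1,\infty]$: either $f$ fixes $\Delta$, giving $c\in\Delta$, or $\Delta^f\cap\Delta=\emptyset$ forces $a,b,a_1,b_1$ among the auxiliary pairs of $f$, which is impossible for support reasons.

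In the generic branch the gap is more serious: after concluding $\Delta\supseteq\{a,b,c,a_1,b_1,c_1\}$, hence $k\geq 6$, you derive no contradiction at all, but defer to ``the enumeration of small designs'' over the range $13<n\leq 25$, leaving $(n,k)\in\{(19,6),(22,7),(25,6)\}$ unexcluded. This is not a legitimate reduction: the lemma is stated for all $n\geq 10$ and is invoked in Lemma~\ref{l: lambda 2 options} precisely to rule out $n=16$ and to force $(n,\ell,k)=(19,9,2)$, so an unexcluded pair $(19,6)$ would break the downstream proof; moreover no enumeration is actually carried out. The paper closes every case (generic, one line, or two/three lines) uniformly: once all six triangle points lie in $\Delta$, choose $d\notin\Delta\cup\{\infty\}$ (possible since $\ell\geq 2$) and compute $e:=[\infty,b,d,\infty]$. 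Since $\overline{\infty,b}\subseteq\Delta\cup\{\infty\}$ one has $b^e=d$, so $e$ moves $\Delta$ and $|\supp(e)|\geq 2k\geq 12$; on the other hand $\Delta^e\cap\Delta=\emptyset$ forces the four points $a,c,a_1,c_1\in\Delta$ to coincide with auxiliary points of $e$, whence $|\supp(e)|\leq 10$, a contradiction. This single computation replaces your parameter check and makes the lemma independent of any design enumeration.
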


\begin{proof}
By Lemmas \ref{l: k not 3} and \ref{l: no infinity triangle} we may suppose that $k\geq 5$ and let $a,b,c,a_1,b_1,c_1$ be an $\infty$-triangle. Observe first that
\[
g:= [\infty,a,b,\infty]=(c,b_1)(e,f)(c,a_1)
\]
where $\{a,b,e,f\}$ is a line. Since $|\supp(g)|<6$ we observe that $g$ must fix all blocks and it is easy to calculate that $a_1,b_1$ lie in the same block $\Delta$, in fact the same is true of $c$ unless $\{a,b,a_1,b_1\}$ is a line. Since the set-up we have here is symmetrical we observe that
\begin{itemize}
 \item[(i)] $a_1,b_1,c_1$ all lie in the same block $\Delta$;
 \item[(ii)] if $\{a,b,a_1,b_1\}$ is not a line, then $c\in\Delta$;
 \item[(iii)] if $\{a,c,a_1,c_1\}$ is not a line, then $b\in\Delta$;
 \item[(iv)] if $\{c,b,c_1,b_1\}$ is not a line, then $a\in\Delta$.
 \end{itemize}

 Observe that if $\{a,b,a_1,b_1\}$ is a line, then
\[
h:= [\infty,a,a_1,\infty]=(a,a_1)\Big((b_1,c)(b,c_1)(b_1,b)(r,s)(b,c)(t,u)\Big)
\]
where $\{r,s,a,a_1\}$ and $\{t,u,a_1,\infty\}$ are lines.
If $h$ moves $\Delta$ then $b_1 \in \{t,u\}$ and $|\supp(h)|\leq 9 $, a contradiction, so $h$ fixes $\Delta$ and $a\in\Delta$. A symmetric argument also implies that $b\in\Delta$. Thus we deduce that 
\begin{itemize}
 \item[(i)] $a_1,b_1,c_1$ all lie in the same block $\Delta$;
 \item[(ii)] if $\{a,b,a_1,b_1\}$ is a line, then $a,b\in\Delta$;
 \item[(iii)] if $\{a,c,a_1,c_1\}$ is a line, then $a,c\in\Delta$;
 \item[(iv)] if $\{c,b,c_1,b_1\}$ is a line, then $b,c\in\Delta$.
 \end{itemize}
We conclude that, if either zero or two of the following sets -- 
\[\{a,b,a_1,b_1\}, \{a,c,a_1,c_1\}, \{c,b,c_1,b_1\}\]
-- are lines, then $\Delta \supseteq \{a,b,c,a_1,b_1,c_1\}$. Suppose then, that exactly one of these sets is a line, without loss of generality the first. In this case we know that $\Delta\supset\{a,b,a_1,b_1,c_1\}$. Then
\[
f:= [\infty,c,c_1,\infty]=(c,c_1)(a,b_1)(b,a_1)(r,s)(t,u)(b,a)(v,w),
\]
where
\[
 \{c,c_1,r,s\}, \{c,c_1,t,u\}, \{\infty,c_1,v,w\}
\]
are all lines. Observe that $c^f=c_1$ thus, if $f$ fixes $\Delta$, then $c\in\Delta$. Suppose that $f$ does not fix $\Delta$. Then we require that, for any $\delta\in\Delta$, $\delta^f\not\in\Delta$. But this is impossible since $a,b,a_1,b_1 \in \{r,s,t,u,v,w\}$, and $f$ cannot contain all elements in the image of $\Delta$ under $f$ in its support. We conclude that $\Delta \supseteq \{a,b,c,a_1,b_1,c_1\}$.

Now consider $e:=[\infty,b,d,\infty]$ where $d$ is any element that is not in $\Delta\cup\{\infty\}$. Note that, in particular, $d \notin \overline{\infty,b}$ (since these points lie in $\Delta$) and so $b^e=d$, $e$ does not fix $\Delta$ and $|\supp(e)| \geq 12$. Now observe that
\[
 e=(b,d)\Big((a,c_1)(c,a_1)(r,s)(t,u)(v,w)(x,y)\Big)
\]
where
\[
 \{b,d,r,s\}, \, \{b,d,t,u\}, \, \{\infty,d,v,w\}, \textrm{ and } \{\infty,d,x,y\}
\]
are all lines. Since $a,c,a_1$ and $c_1$ all lie in $\Delta$ it is clear that, to ensure $\Delta^e\cap\Delta=\emptyset$, these same four points must all lie in the set $\{r,s,t,u,v,w,x,y\}$. This implies, in particular, that $|\supp(e)|\leq 10$ which is a contradiction.

\end{proof}

\begin{Lem}\label{l: lambda 2 options}


$n \leq 13$. 

\end{Lem}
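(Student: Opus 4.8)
The plan is to argue by contradiction: assume $n \geq 14$ and derive a contradiction, which forces $n \leq 13$. Throughout this subsection we are assuming that $G=\pi_\infty(\De)$ is transitive and imprimitive with $\ell$ blocks each of size $k$, where $n-1=k\ell$; since $n\geq 14 > 9 = 4\lambda+1$, every structural lemma of the previous section is available. First I would apply Lemma~\ref{l: infinity triangle}, which (as $n\geq 10$) forces $k=2$ or $k=4$. I would also record the elementary fact that $k\mid n-1$, since the blocks partition $\Omega\backslash\{\infty\}$, a set of size $n-1$.

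Next I would extract a numerical upper bound on $n$ from Lemma~\ref{l: prim}, which offers two alternatives. If alternative~(i) holds, i.e.\ every pair of points in a common block is collinear with $\infty$, then Lemma~\ref{l: bound k below} forces $k=3$ (here $\lambda=2$), contradicting $k\in\{2,4\}$; one could equally invoke Lemma~\ref{l: bound k above}, which gives $k\leq 3$, together with Lemma~\ref{l: k not 3}. Hence alternative~(ii) must hold, so $n \leq \tfrac{6\ell}{\ell-1}\lambda + 1 = \tfrac{12\ell}{\ell-1}+1$. Substituting $\ell=(n-1)/k$, clearing denominators and cancelling the positive factor $n-1$ gives $\ell(n-13)\leq n-1$, that is $\tfrac{n-13}{k}\leq 1$, and hence the clean bound $n\leq k+13$. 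With $k\leq 4$ this yields $n\leq 17$.

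Finally I would combine $14\leq n\leq 17$ with the existence constraints on the design. As in the proof of Theorem~\ref{t: weak C}, a supersimple $2$-$(n,4,2)$ design exists only when $n\equiv 1,4 \pmod 6$, this following from the divisibility conditions $|\B|=\tfrac{n(n-1)}{6}\in\mathbb{Z}$ and $r=\tfrac{2(n-1)}{3}\in\mathbb{Z}$. Running through the four candidates: $n=14$, $15$, $17$ are excluded outright, being congruent to $2$, $3$, $5$ modulo $6$ respectively; and for $n=16$ a design does exist, but then $n-1=15$ is divisible by neither $2$ nor $4$, contradicting $k\mid n-1$ with $k\in\{2,4\}$. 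Thus no value of $n$ in the range $14\leq n\leq 17$ is admissible, and we conclude $n\leq 13$.

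I expect the only genuine subtlety to lie in the bookkeeping of the last paragraph, where three independent constraints must be assembled simultaneously: the analytic bound $n\leq k+13$, the divisibility $k\mid n-1$, and the arithmetic existence congruence $n\equiv 1,4\pmod 6$. In particular the surviving value $n=16$ is eliminated not by the congruence (which it satisfies) but by the divisibility condition on the block size, and this is the step easiest to overlook. All the heavy lifting has already been done in the preceding $\infty$-triangle analysis that pins $k$ down to $\{2,4\}$; once that reduction is in hand, the present endgame is entirely elementary.
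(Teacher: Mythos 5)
Your proof is correct and essentially follows the paper's own argument: both resolve the Lemma~\ref{l: prim} dichotomy by killing alternative~(i) with Lemmas~\ref{l: bound k above}--\ref{l: k not 3}, and then combine the alternative~(ii) bound with Lemma~\ref{l: infinity triangle} (so $k\in\{2,4\}$), the existence congruence $n\equiv 1\pmod 3$, and the divisibility $k\mid n-1$. Your only deviation is a tidier endgame: substituting $\ell=(n-1)/k$ into the (ii) bound to obtain $n\leq k+13\leq 17$ directly lets you bypass the paper's preliminary bound $n\leq 19$ (obtained via Lemma~\ref{l: cool} and a case split on $\ell$) and its separate elimination of the case $n=19$.
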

 \begin{proof}
Lemma~\ref{l: cool} implies that $n-1\leq 7\ell$. We apply Lemma~\ref{l: prim}.  Suppose that part (i) of Lemma~\ref{l: prim} holds, i.e.\ that if $a,c\in \Omega$ lie in the same block of imprimitivity, then $\infty\in\overline{a,c}$. Then Lemma~\ref{l: bound k above} implies that $k\leq 3$, Lemma~\ref{l: bound k below} implies $k\neq 2$ and Lemma \ref{l: k not 3} implies $k \neq 3$, a contradiction. 

Thus (ii) of Lemma~\ref{l: prim} holds and \begin{equation}\label{e: bounds} n\leq \min\{\frac{12\ell}{\ell-1}+1, 7\ell+1\}. \end{equation}

Suppose that $n>19$. If $\ell=2$, then by (\ref{e: bounds}) $n\leq 15$ which is a contradiction. Similarly if $\ell>2$, then 
\[n\leq \frac{18}{2}2+1=19\]
and, once again, we have a contradiction. Thus $n\leq 19$ and since any $2$-$(n,4,2)$ design satifies $n\equiv 1\pmod 3$,  it remains to consider separately the cases $n=16,19$.

Since $k \mid n-1$ it is an immediate consequence of Lemma \ref{l: infinity triangle} that $n \neq 16$. Thus $(n,\ell,k)=(19,9,2)$ so that $\ell>3$ and we have 
\[n\leq \frac{24}{3}2+1=17,\]
a contradiction.

 \end{proof}

\begin{proof}[Proof of Theorem C]
We apply Theorem~\ref{t: weak C} and the result holds, except if $\lambda=2$, ${13<n\leq 25}$ and $\pi_\infty(\De)$ is imprimitive. Now Lemma~\ref{l: lambda 2 options} implies that this exceptional situation is impossible, and we are done.
\end{proof}

\section{Open questions}\label{s: ext}

To conclude the paper we outline several avenues for further work.

\subsection{Possibilities for \texorpdfstring{$\pi_\infty(\De)$}{pi-infinity}}

We see no reason why Corollary \ref{c: prim} should not hold more generally. More precisely, we make the following conjecture:

\begin{Conj}\label{conjprim}
Let $\De=(\Omega,\B)$ be a supersimple $2$-$(n,4,\lambda)$ design. For all $\lambda > 0$ there exists $f(\lambda)$ such that either $\pi_\infty(\De)$ acts primitively on $\Omega \backslash \{\infty\}$ or $n < f(\lambda)$.
\end{Conj}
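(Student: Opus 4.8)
The plan is to follow the template of Corollary~\ref{c: prim} and push the structural lemmas of \S\ref{s: lambda12} as far as possible for general $\lambda$, reducing to a single hard case. Suppose $G:=\pi_\infty(\De)$ is imprimitive and $n>12\lambda+1$; the aim is to bound $n$ by a function of $\lambda$. By Lemma~\ref{transg} the group $G$ is transitive, so it preserves a non-trivial system of imprimitivity with $\ell$ blocks each of size $k$. By Lemma~\ref{l: prim} option~(i) must hold (option~(ii) would force $n\leq\frac{6\ell}{\ell-1}\lambda+1\leq 12\lambda+1$), so any two points in a common block are collinear through $\infty$; by Lemma~\ref{l: prim 2} every line through $\infty$ meets each block in at most two points; and by Lemma~\ref{imprimblock} every block of every such system satisfies $2\leq k\leq\lambda$. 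The obstruction to simply copying Corollary~\ref{c: prim} is now visible: for $\lambda=1,2$ Lemmas~\ref{l: bound k below} and~\ref{l: k not 3} force $k\geq 3>\lambda$ and we are done, but for $\lambda\geq 3$ the range $2\leq k\leq\lambda$ is non-empty, and this bounded-block regime must be eliminated.

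First I would pass to the induced action. Let $\overline{G}$ be the image of $G$ in its action on the set of $\ell$ blocks of a \emph{coarsest} non-trivial block system, so that $\overline{G}$ is primitive; even for this system $2\leq k\leq\lambda$ by Lemma~\ref{imprimblock}, whence $\ell=(n-1)/k\geq(n-1)/\lambda$ grows with $n$. By Lemma~\ref{lem1}(d), $G=\langle[\infty,a,b,\infty]\mid a,b\in\Omega\setminus\{\infty\}\rangle$, and each such generator $g$ has $|\supp(g)|\leq 6\lambda+2$ by Lemma~\ref{l: mindeg}(1). If $g$ moves a block $\Delta$ then $g(\Delta)$ is a block disjoint from $\Delta$, so all $k\geq 2$ points of $\Delta$ lie in $\supp(g)$; hence $g$ moves at most $(6\lambda+2)/k\leq 3\lambda+1$ blocks. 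Thus $\overline{G}$ is a primitive group of degree $\ell$ generated by elements of support at most $3\lambda+1$, so $\mu(\overline{G})\leq 3\lambda+1$.

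Now I would invoke a Jordan-type bound. If $\overline{G}\not\geq A_\ell$, then a classical theorem of Jordan (in the quantitative form underlying Theorem~\ref{t: bounds}; see \cite{wielandt, manning, manning1}) bounds $\ell$ by a function $F(3\lambda+1)$, whence $n-1=k\ell\leq\lambda\cdot F(3\lambda+1)$ is bounded, and taking $f(\lambda)$ to exceed this bound and $12\lambda+1$ completes this case. A complementary combinatorial check sharpens the regime: each within-block pair is covered exactly once by a line through $\infty$ (by option~(i) and supersimplicity), while each of the $r=(n-1)\lambda/3$ lines through $\infty$ covers at most one within-block pair (by Lemma~\ref{l: prim 2}), giving $\ell\binom{k}{2}\leq(n-1)\lambda/3$, i.e.\ $k\leq\frac{2\lambda}{3}+1$.

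The hard part will be the remaining case $\overline{G}\geq A_\ell$, where the Jordan bound is vacuous. Here I would argue directly with the design. Since $A_\ell$ is highly transitive on blocks for large $\ell$, one may transport a small-block-support element of $\overline{G}$ back to an explicit element $[\infty,a_1,\ldots,a_m,\infty]\in G$ and analyse its cycle structure inside the moved blocks using collinearity through $\infty$ and supersimplicity, exactly as in the configuration analyses of Lemmas~\ref{l: no infinity triangle} and~\ref{l: infinity triangle} but now for general $k\leq\lambda$. The goal is to exhibit, for $n$ large, a generator $[\infty,a,b,\infty]$ that moves one point of a block $\Delta$ while fixing another point of $\Delta$ (contradicting that $G$ permutes blocks), or to show that the within-block structure forced by option~(i) is incompatible with a full alternating action on the blocks. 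I expect this $\overline{G}\geq A_\ell$ case to be the genuine crux: it is precisely the phenomenon that Lemmas~\ref{l: bound k below} and~\ref{l: k not 3} dispatched by ad hoc means for $\lambda\leq 2$, and making such an argument uniform in $\lambda$ (or else replacing it by an appeal to the O'Nan--Scott theorem and CFSG to control the admissible primitive block actions) is the substantive content of the conjecture.
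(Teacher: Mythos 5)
You should be aware at the outset that the statement you were asked to prove is Conjecture~\ref{conjprim}: the paper itself offers \emph{no} proof of it. It is posed as an open problem in \S\ref{s: ext}, and (as recorded in \S\ref{s: subsequent}) it was settled only in the subsequent work \cite{ggs}. So there is no in-paper argument to compare yours against; the only question is whether your proposal stands as a proof on its own, and it does not --- it is a correct partial reduction with an explicitly unresolved case, which you yourself flag.

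What you do establish is sound and goes somewhat beyond the paper. Your assembly of the machinery of \S\ref{s: lambda12} is accurate: for $n>12\lambda+1$, transitivity (Lemma~\ref{transg}), the forcing of option~(i) in Lemma~\ref{l: prim}, the constraint from Lemma~\ref{l: prim 2}, and the bound $2\leq k\leq\lambda$ (Lemma~\ref{imprimblock}) all apply as you say, and your diagnosis that $\lambda\leq 2$ is precisely the regime where this window is empty (after Lemmas~\ref{l: bound k below} and~\ref{l: k not 3}) matches Corollary~\ref{c: prim}. Your two new observations are also correct: generators $[\infty,a,b,\infty]$ of support at most $6\lambda+2$ (Lemma~\ref{l: mindeg}) move at most $(6\lambda+2)/k\leq 3\lambda+1$ blocks, so the primitive block image $\overline{G}$ has minimal degree at most $3\lambda+1$ and Jordan's theorem (the general form behind Theorem~\ref{t: bounds}) bounds $\ell$ unless $\overline{G}\geq A_\ell$; and the double count of within-block pairs against lines through $\infty$, valid under option~(i), supersimplicity and Lemma~\ref{l: prim 2}, does give $\ell\binom{k}{2}\leq(n-1)\lambda/3$ and hence $k\leq\frac{2\lambda}{3}+1$. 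The genuine gap is the case $\overline{G}\geq A_\ell$, and your treatment of it is a statement of intent rather than an argument. Nothing in the support bounds excludes it: $A_\ell$ is generated by $3$-cycles, so bounded support in the block action yields no contradiction, and an imprimitive $G$ with blocks of size $k\leq\lambda$ and full alternating block image is a priori consistent with everything proved so far. The configuration analyses of Lemmas~\ref{l: no infinity triangle} and~\ref{l: infinity triangle} that you propose to generalize are ad hoc $\lambda=2$ case checks, tracking coincidences among the handful of points in $\supp([\infty,a,b,\infty])$; for general $\lambda$ the number of such coincidence patterns grows with $\lambda$ and you give no mechanism making the analysis uniform. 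Your fallback --- O'Nan--Scott plus CFSG to ``control the admissible primitive block actions'' --- cannot close this either, since the alternating image is itself a primitive group and the task is to exclude it by design-theoretic input, not to classify the candidates. In short: everything up to the reduction is correct, but the reduction terminates exactly at the point where the conjecture's actual content begins, which is why the paper leaves it open and why the eventual proof in \cite{ggs} required new ideas.
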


Observe that $\pi_\infty(\De)$ is generated by even permutations if $\lambda$ is odd and by odd permutations if $\lambda$ is even. By \cite[Corollary 3]{ls}, Conjecture \ref{conjprim} would imply the following:

\begin{Conj}\label{conjprim2}
Let $\De=(\Omega,\B)$ be a supersimple $2$-$(n,4,\lambda)$ design. For all $\lambda > 0$ there is some $g(\lambda)$ such that for all $n > g(\lambda)$,
 $$\pi_\infty(\De) \cong
\left\{
	\begin{array}{ll}
		S_{n-1}  & \mbox{if } \lambda \equiv 0 \pmod 2; \\
		A_{n-1}& \mbox{if } \lambda \equiv 1 \pmod 2.  
	\end{array}
\right.$$
\end{Conj}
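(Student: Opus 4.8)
The plan is to deduce Conjecture \ref{conjprim2} from Conjecture \ref{conjprim}, using two further ingredients already in place: a bound on the minimal degree of $\pi_\infty(\De)$ that depends only on $\lambda$, and the parity calculation recorded just before the statement. Thus everything I describe is conditional on Conjecture \ref{conjprim}; the content of the argument is to show that, once primitivity is known for large $n$, the group is forced to be exactly $A_{n-1}$ or $S_{n-1}$ according to the parity of $\lambda$.

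Assume Conjecture \ref{conjprim} and fix $\lambda$, writing $G:=\pi_\infty(\De)$ acting on $\Omega\backslash\{\infty\}$, a set of size $n-1$. First, by Conjecture \ref{conjprim} there is $f(\lambda)$ so that $G$ is primitive whenever $n\geq f(\lambda)$. Second, I would bound the minimal degree $\mu(G)$ uniformly in $n$: by Lemma \ref{l: mindeg}(1), $G$ is generated by elements of support at most $6\lambda+2$, and any generator $[\infty,a,b,\infty]$ with $\infty\notin\overline{a,b}$ is non-trivial (it sends $a$ to $b$), so $\mu(G)\leq 6\lambda+2$, a bound independent of $n$ (sharpening to $6(\lambda-1)$ for even $\lambda$ via Lemma \ref{l: mindeg}(2), though this is not needed). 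Third, I would invoke the classical bound \cite[Corollary 3]{ls}: a primitive group of degree $d$ not containing the alternating group has minimal degree of order $d$, so once $d=n-1$ is large relative to the constant $6\lambda+2$ the group must contain $A_{n-1}$. Hence there is $g(\lambda)\geq f(\lambda)$ with $A_{n-1}\leq G$ for all $n>g(\lambda)$.

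Finally, the parity observation pins $G$ down exactly. Each elementary move is a product of $1+\lambda$ transpositions, so each generator $[\infty,a,b,\infty]$, being a product of three elementary moves, has sign $(-1)^{3(1+\lambda)}=(-1)^{1+\lambda}$. Thus $G$ is generated by even permutations when $\lambda$ is odd and by odd permutations when $\lambda$ is even. Combined with $A_{n-1}\leq G$, this yields $G\cong A_{n-1}$ in the odd case and $G\cong S_{n-1}$ in the even case, which is the assertion of Conjecture \ref{conjprim2}. The genuinely hard step, and the reason this remains conjectural, is Conjecture \ref{conjprim} itself: one must rule out imprimitive (and intransitive) behaviour for \emph{all} $\lambda$ simultaneously, whereas Corollary \ref{c: prim} achieves this only for $\lambda\leq 2$ by a delicate case analysis of blocks of imprimitivity. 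Given primitivity, the passage to $A_{n-1}$ or $S_{n-1}$ above is routine.
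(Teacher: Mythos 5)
Your argument is correct and is essentially the paper's own: the statement is presented there as a conjecture precisely because the paper derives it conditionally from Conjecture \ref{conjprim}, via the same two ingredients you use --- the parity observation that $\pi_\infty(\De)$ is generated by even permutations for odd $\lambda$ and odd ones for even $\lambda$, and \cite[Corollary 3]{ls} to force $A_{n-1}\leq\pi_\infty(\De)$ once primitivity is assumed for large $n$. Your only addition is to make explicit the uniform minimal-degree bound $\mu(\pi_\infty(\De))\leq 6\lambda+2$ from Lemma \ref{l: mindeg}, which is exactly the intended input to the Liebeck--Saxl bound.
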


Put another way, Conjecture \ref{conjprim2} would imply that for each $\lambda > 0$, there exist only finitely many hole stabilizers which are not alternating or symmetric groups. More ambitiously, we can ask:

\begin{Qu}\label{q: classify}
For what values of $n$ and $\lambda$ can one classify the groups $\pi_\infty(\De)$ (up to isomorphism) of all supersimple $2$-$(n,4,\lambda)$ designs $\De$?
\end{Qu}

Theorem C yields an affirmative answer to this question for all values of $n$ with $\lambda\leq 2$.

%
%
%

\subsection{Codes and Designs}
Recall that Conway et al.~ use the projective plane $\mathbb{P}_3$ to construct the perfect ternary Golay code.  They do
this by taking a subcode of the $\mathbb{F}_3$-rowspace of the incidence matrix of $\mathbb{P}_3$. Using the designs (other than $\mathbb{P}_3$)
described in this paper for which $\pi_{\infty}(\mathcal{D})$ is acting primitively on $\Omega\backslash\{\infty\}$ but does not contain $A_{n-1}$, we constructed the
following three codes in GAP:  
\begin{itemize}
\item[i)] $C$, the $\mathbb{F}_2$-rowspace of the incidence matrix of $\mathcal{D}$;
\item[ii)] $C^*$, the punctured code of $C$;
\item[iii)] $C_s$, the shortened code of $C$.
\end{itemize}
Because the lines of $\mathcal{D}$ consist of an even number of points, the codewords of $C$ have even weight.
In this case, the analogous code to the one constructed by Conway et al.~is the shortened code $C_s$.
Observe that, for the $2$-$(10,4,2)$ design,
the code $C_s$ is obtained by puncturing the code $C_p$ given in Example \ref{codeex}.
Certain parameters of these codes, for each design, are described in Table \ref{table2}. 
\begin{table}[ht]
\begin{center}
\begin{tabular}{ccc|c|c|c|c|c|c|}
\cline{4-9}
& & &\multicolumn{2}{ c| }{$C$}  & \multicolumn{2}{ c| }{$C^*$}&\multicolumn{2}{ c| }{$C_s$} \\ \cline{1-9}
\multicolumn{1}{ |c| }{$n$}&\multicolumn{1}{ c| }{$\lambda$}&\multicolumn{1}{ c| }{$k$}&$\rho$&$t$&$\rho^*$&$t^*$&$\rho_s$&$t_s$\\\hline
\multicolumn{1}{ |c| }{10}&\multicolumn{1}{ c| }{2}&5&3&3&2&2&3&5\\
\multicolumn{1}{ |c| }{16}&\multicolumn{1}{ c| }{3}&10&4&4&3&3&4&7\\
\multicolumn{1}{ |c| }{28}&\multicolumn{1}{ c| }{5}&21&3&3&2&2&3&5\\
\multicolumn{1}{ |c| }{36}&\multicolumn{1}{ c| }{9}&29&3&3&2&2&3&5\\\hline
\end{tabular}
\end{center}
\mbox{ } \newline
\begin{center}
\caption{Codes from $2$-$(n,4,\lambda)$ designs with primitive $\pi_{\infty}(\mathcal{D})$, not containing $A_{n-1}$.}\label{table2}
\end{center}
\end{table}

The parameter $k$ is the rank of the incidence matrix of $\mathcal{D}$ over $\mathbb{F}_2$. 
In each case $C$ is a $[n,k,4]$-code, $C^*$ is a $[n-1,k,3]$-code and $C_s$ is a $[n-1,k-1,4]$-code.
Moreover, $C$, $C^*$, $C_s$ has covering radius $\rho$, $\rho^*$, $\rho_s$, and external distance 
$t$, $t^*$, $t_s$, respectively. (Recall that the external distance of a linear code is the number of non-zero weights that appear
in the weight distribution of the dual code.) 

Completely regular codes have a high degree of combinatorial symmetry, and have been studied extensively (see \cite{delsarte, neum} and, more recently, 
\cite{nonantipodal, rho=2, binctrarb, kronprod,extendconst}).
Additionally, a certain family of distance regular graphs can be described as the coset graph of a completely regular code \cite[p.~353]{distreg}, 
so such codes are also of interest
to graph theorists. It is known that completely regular codes are necessarily uniformly packed (in the wide sense) \cite{distreg}.  
However, there are only a few examples of codes
known which are uniformly packed and not completely regular \cite{kronprod}. 

For each design, we see in Table \ref{table2} that $\rho=t$ and $\rho^*=t^*$, therefore $C$ and $C^*$ are uniformly packed (in the wide sense) \cite{remufp}.  Also, we 
observe that in lines $1$, $3$ and $4$ of Table \ref{table2}, the minimum distance of $C$ is equal to $2t-2$.  Therefore, as $C$
consists of codewords of even weight, for these lines in Table \ref{table2}, $C$ is completely regular \cite[p.~347]{distreg}. 
A result of Brouwer \cite{brou} implies that $C^*$ is completely regular in these cases also.  
We ask the following natural question.

\begin{Qu}
Let $\mathcal{D}$ be a supersimple $2$-$(n,4,\lambda)$ design such that $\pi_{\infty}(\mathcal{D})$
acts primitively but does not contain $A_{n-1}$.  Is the $\mathbb{F}_p$-rowspace (for some prime $p$) of the incidence matrix of $\mathcal{D}$
necessarily a completely regular and/or uniformly packed code in $\mathbb{F}_p^n$?
\end{Qu}

See \cite[Theorem C]{ggs} for some recent progress towards answering this question.




\subsection{The exceptional automorphism of \texorpdfstring{$S_6$}{S6}}

Recall that in the original paper \cite{Co} the authors `play the game' on the points of the projective plane $\mathbb{P}_3$ and show that the associated hole stabilizers are isomorphic to the Mathieu group $M_{12}$. 

By utilising the fact that $\mathbb{P}_3$ is \emph{self-dual} the authors are able to describe an alternative to the original game in which the roles of points and lines are reversed. By playing the two games simultaneously the authors are able to exhibit the outer automorphism of $M_{12}$ using the geometry of $\mathbb{P}_3$.

In the more general context of $2$-$(n,4,\lambda)$ designs, one cannot (obviously) pursue this idea, because the dual incidence system of a design is not necessarily a design, never mind a design isomorphic to the original. In particular, the only $2$-$(n,4,1)$ design that is self-dual is $\mathbb{P}_3$. 

Similarly, the only $2$-$(n,4,2)$ design that is self-dual is the unique $2$-$(7,4,2)$ design which, by Theorem C, has hole stabilizer isomorphic to $S_6$. Note that this exceptional property of the design (self-duality) is mirrored by an exceptional property of the associated hole stabilizer ($n=6$ is the only value for which $S_n$ has an outer automorphism). Thus it is natural to ask:

\begin{Qu}
 Can one exhibit an outer automorphism of $S_6$ via the geometry of the $2$-$(7,4,2)$ design?
\end{Qu}

\end{document}